\newtheorem{theorem}{\textbf{Theorem}}
\newtheorem*{theorem*}{\textbf{Theorem}}
\newtheorem{definition}{\textbf{Definition}}
\newtheorem{prop}{\textbf{Proposition}}
\newtheorem{cor}{Corollary}
\newtheorem{lemma}{Lemma}
\title{On the Design of Attitude Observers on the Special Orthogonal Group $SO(3)$}
\author{Soulaimane Berkane and Abdelhamid Tayebi
\thanks{This work was supported by the National Sciences and Engineering Research Council of Canada (NSERC).}
\thanks{The authors are with the Department of Electrical and Computer Engineering, University of Western Ontario, London, Ontario, Canada. A. Tayebi is also with the Department of Electrical Engineering, Lakehead University, Thunder Bay, Ontario, Canada.
     {\tt\small sberkane@uwo.ca, atayebi@lakeheadu.ca} }%
}
\begin{document}
\maketitle
\begin{abstract}
We revisit the nonlinear complimentary filter on $SO(3)$, previously proposed in the literature, and provide the (time-explicit) solution to the matrix ODE governing the attitude estimation error in the absence of measurement errors. The stability and performance properties of this filter can be easily deduced from the obtained closed-from solution. Thereafter, we consider two nonlinear complimentary filters (with state-dependant gains) which are shown to exhibit improved stability and performance proprieties compared to the traditional filter. We perform robustness analysis for the three discussed attitude filters on $SO(3)$ with respect to attitude and angular velocity measurement errors. Specifically, we show that the state-dependant-gain filters may exhibit improved robustness to gyro measurement disturbances and a better disturbance attenuation levels. Simulation results are performed to confirm the obtained theoretical results.
\end{abstract}
\section{Introduction}

The ability to estimate the orientation (attitude) of a rigid body is an important feature in many engineering applications. As such, this problem has attracted the attention of many researchers and industrials for several decades. This is mainly due to the fact that there is ``no sensor" that directly measures the attitude. The attitude information is usually reconstructed using a set of body-frame measurements of known inertial vectors. Static attitude reconstruction from inertial vector measurements is one of the earliest solutions to this problem (see, for instance, \cite{Shuster1981, Markley1988}). Although simple, these methods do not perform well in the presence of measurements noise. As an alternative solution, several Kalman-type filters have been developed and successfully used in aerospace applications, although with extra care as they usually rely on linearizations and heavy computations (see, for instance, \cite{Markley2003,crassidis2007survey,crassidis2003unscented,PF2009}).
                 On the other hand, simple and yet practical linear complementary filters (for small rotational motions) have been successfully used in practical applications, \textit{e.g.,} \cite{corke2004inertial,tayebi2006attitude}, where the angular velocity is used to complement the inertial vector measurements to improve the estimation accuracy through an appropriate filtering. Nonlinear attitude filters that use the quaternion measurements have been proposed in \cite{salcudean1991globally,vik2001nonlinear,Thienel2003,bonnabel2006non,tayebi2007attitude}. More recently, nonlinear complimentary filters, evolving on $SO(3)$, have emerged and showed their ability in handling efficiently the attitude estimation problem \cite{Mahony2008,vasconcelos2008nonlinear,grip2012attitude,khosravian2012rigid,zamani2013minimum,izadi2014rigid,zlotnik2017nonlinear}. These filters have the distinctive advantage of using directly inertial vector measurements which are available on-board of most aerial and underwater vehicles; thus obviating the need of reconstructing the attitude. This class of smooth nonlinear observers guarantees, in general, \textit{almost} global asymptotic stability (AGAS), {\textit i.e.,} convergence to the actual attitude is guaranteed from any initial condition except from a set of Lebesgue measure zero. As a matter of fact, AGAS is the strongest result one aims to obtain on a compact manifold such as $SO(3)$ using time-invariant continuous control or estimation algorithms \cite{Koditschek,Bhat2000}. To overcome this topological obstruction, attitude estimators (evolving outside $SO(3)$) with global asymptotic and exponential stability properties have been proposed in \cite{batista2012sensor} and \cite{Batista2012}, respectively. The topological obstruction on $SO(3)$ has been also successfully addressed via the \textit{synergistic} hybrid technique \cite{mayhew2011hybrid,lee2015tracking,berkaneCDC2015synergistic,berkane2015construction}. Using this approach, global asymptotic hybrid attitude observers on $SO(3)$ have been proposed in \cite{lee2015observer} and global exponential hybrid attitude observers on $SO(3)$  have been proposed in \cite{berkaneACC2016observer,berkaneCDC2016observer,berkane2016design}.

Recent studies, such as \cite{izadi2014rigid,izadi2015comparison,zlotnik2017nonlinear,lee2015observer,zlotnik2016exponential}, pointed out that the nonlinear complimentary filters proposed in \cite{Mahony2008,zamani2013minimum}, which are widely used in practice, may suffer from slow convergence and robustness issues. Motivated by these recent studies, the present paper aims to conduct a rigorous performance and robustness analysis of the nonlinear complementary filter on $SO(3)$ and proposes different directions and solutions for improvement. First, we revisit the nonlinear complementary filter on $SO(3)$ proposed in \cite{Mahony2008} in the case of unbiased angular velocity measurements. We derive a closed-form (time-explicit) solution for the estimation error dynamics. The stability and performance properties of this filter can be directly deduced from the obtained solution. In particular, we derive a lower bound on the convergence time of the filter and consequently explain (rigorously) why the filter suffers from slow convergence when initialized at large attitude estimation errors. Then, we consider two state-dependent-gain nonlinear attitude estimators, evolving both on $SO(3)$, exhibiting faster convergence rates, compared to the attitude observer of \cite{Mahony2008}. The two attitude estimators share a similar structure to the observer proposed in \cite{Mahony2008} and are very similar, up to some minor details, to the filters proposed in \cite{zlotnik2017nonlinear} and \cite{zlotnik2016exponential}; which are also inspired from \cite{Thienel2003,lee2012}. The two proposed filters are, however, singularity-free compared to  \cite{zlotnik2017nonlinear} and \cite{zlotnik2016exponential}. Note that that for the sake of simplicity we ignore the integral bias adaptation law proposed in \cite{Mahony2008} which can be added in real applications without affecting the stability of the filter as shown in \cite{Mahony2008}. Furthermore, we investigate the robustness properties of these proposed nonlinear complementary filters on $SO(3)$  in the presence of bounded gyro measurement errors and small attitude measurement errors. It is shown that the newly proposed attitude filters exhibit larger robustness domains compared to the traditional constant gain filter. 
				\section{Background and preliminaries}\label{sec2}
        Throughout the paper, we use $\mathbb{R}$ and $\mathbb{R}^+$ to denote, respectively, the sets of real and nonnegative real numbers. The Euclidean norm of $x\in\mathbb{R}^n$ is defined as $\|x\|=\sqrt{x^\top x}$. For a square matrix $A\in\mathbb{R}^{n\times n}$, we denote by $\lambda_i^A, \lambda_{\mathrm{min}}^A$, and $\lambda_{\mathrm{max}}^A$ the $i$th, minimum, and maximum eigenvalue of $A$, respectively.
\\
The rigid body attitude evolves on the Special Orthogonal group defined as $SO(3) := \{ R \in \mathbb{R}^{3\times 3}|\; \mathrm{det}(R)=1,\; RR^{\top}= I \}$, where $I$ is the three-dimensional identity matrix and $R\in SO(3)$ is called a \textit{rotation matrix}. The \textit{Lie algebra} of $SO(3)$, denoted by $\mathfrak{so}(3):=\left\{\Omega\in\mathbb{R}^{3\times 3}\mid\;\Omega^{\top}=-\Omega\right\}$, is the vector space of 3-by-3 skew-symmetric matrices. Let the map $[\cdot]_\times: \mathbb{R}^3\to\mathfrak{so}(3)$ be defined such that $[x]_\times y=x\times y$, for any $x, y\in\mathbb{R}^3$, where $\times$ is the vector cross-product on $\mathbb{R}^3$. Let $\mathrm{vex}:\mathfrak{so}(3)\to\mathbb{R}^3$ denote the inverse isomorphism of the map $[\cdot]_\times$, such that $\mathrm{vex}([\omega]_\times)=\omega,$ for all $\omega\in\mathbb{R}^3$ and $[\mathrm{vex}(\Omega)]_\times=\Omega,$ for all $\Omega\in\mathfrak{so}(3)$. Defining $\mathbb{P}_a:\mathbb{R}^{3\times 3}\to\mathfrak{so}(3)$ as the projection map on the Lie algebra $\mathfrak{so}(3)$ such that $ \mathbb{P}_a(A):=(A-A^{\top})/2$, we can extend the definition of $\mathrm{vex}$ to $\mathbb{R}^{3\times 3}$ by taking the composition map $\psi := \mathrm{vex}\circ \mathbb{P}_a$ 
        such that, for a $3$-by-$3$ matrix $A:=[a_{ij}]_{i,j = 1,2,3}$, one has
        \begin{equation}\label{psi}
        \psi(A):=\mathrm{vex}\left(\mathbb{P}_a(A)\right)=\frac{1}{2}\left[\begin{array}{c}
        a_{32}-a_{23}\\a_{13}-a_{31}\\a_{21}-a_{12}
        \end{array}
        \right].
        \end{equation}
Let $|R|_I\in[0, 1]$ be the normalized Euclidean distance on $SO(3)$ which is given by
\begin{equation}\label{norm_R}
        |R|_I^2:=\frac{1}{4}\mathrm{tr}(I-R).
\end{equation}
    The attitude of a rigid body can also be represented as a rotation of angle $\theta\in\mathbb{R}$ around a unit vector axis $u\in\mathbb{S}^2$. This is commonly known as the angle-axis parametrization of $SO(3)$ and is given by the map $\mathcal{R}_{a}:\mathbb{R}\times\mathbb{S}^2\to SO(3)$ such that
            \begin{eqnarray}\label{Rod_formula}
            \mathcal{R}_{a}(\theta,u)=I+\sin(\theta)[u]_\times+(1-\cos\theta)[u]_\times^2.
            \end{eqnarray}
Alternatively, elements of $SO(3)$ can be parameterized by vectors on $\mathbb{R}^3$ through the map $\mathcal{R}_{r}:\mathbb{R}^3\to SO(3)$ such that
            \begin{align}\nonumber
            \mathcal{R}_{r}(z)&=(I+[z]_\times)(I-[z]_\times)^{-1}\\
            							&=\frac{1}{1+\|z\|^2}\left((1-\|z\|^2)I+2zz^\top+2[z]_\times\right).\label{Cayley_formula}	
            \end{align}
Equation \eqref{Cayley_formula} is often known as Cayley's formula \cite{cayley1846quelques}. Note that the matrix $(I-[z]_\times)$ is always invertible for all $z\in\mathbb{R}^3$. In fact, since $[z]_\times$ is a skew-symmetric matrix, all its eigenvalues are pure imaginary and, thus, all the eigenvalues of $I-[z]_\times$ are non-zero. The map $\mathcal{R}_r$ is a diffeomorphism between $\mathbb{R}^3$ and $SO(3)\setminus\Pi$ with $\Pi=\{R\in SO(3)\mid |R|_I=1\}$. The inverse map $\mathcal{Z}: SO(3)\setminus\Pi\to\mathbb{R}^3$ is given by
\begin{equation}\label{Rod_vec}
\mathcal{Z}(R)=\mathrm{vex}\big((R-I)(R+I)^{-1}\big)=\frac{\psi(R)}{2(1-|R|_I^2)}.
\end{equation}
The vector $\mathcal{Z}(R)\in\mathbb{R}^3$ defines the vector of Rodrigues parameters. Note that the Rodrigues vector is usually defined using unit quaternion or the angle-axis representation\cite{Shuster1993}. We prefer to use directly rotation matrices on $SO(3)$ in \eqref{Rod_vec}. It can be verified that the time derivative of the Rodrigues vector $\mathcal{Z}(R)$ along the trajectories of $\dot R=R[\xi]_\times, \xi\in\mathbb{R}^3$ is given by
\begin{equation}\label{dZ}
\frac{d}{dt}\mathcal{Z}(R)=\frac{1}{2}\big(I+[\mathcal{Z}(R)]_\times+\mathcal{Z}(R)\mathcal{Z}(R)^\top\big)\xi.
\end{equation}
It is worth pointing out that all the attitude filters derived in this paper are directly evolving on the Special Orthogonal group $SO(3)$. The introduction of the three-parameters Rodrigues vector is only for the sake of analysis. The following technical lemmas are useful throughout the paper.
\begin{lemma}\label{lemma::1}
Let $R\in SO(3)$ and $A=A^\top\in\mathbb{R}^{3\times 3}$ such that $\bar A=\frac{1}{2}(\mathrm{tr}(A)I-A)$ is positive definite. Then, the following hold
\begin{align}\label{R2_norm_theta}
\|\psi(R)\|^2=|R^2|_I^2&=4|R|_I^2(1-|R|_I^2),\\
\label{bounds::VA}
4\lambda_{\min}^{\bar{A}}|R|_I^2\leq\mathrm{tr}(A(I&-R))\leq4\lambda_{\max}^{\bar{A}}|R|_I^2,\\
\xi^2|R|_I^2(1-|R|_I^2)\leq\frac{\|\psi(AR)\|^2}{(2\lambda_{\max}^{\bar A})^2}&\leq |R|_I^2(1-\xi^2|R|_I^2)\label{psiA_ineq}
\end{align}
such that $\xi:=\lambda_{\min}^{\bar A}/\lambda_{\max}^{\bar A}$. Moreover, for all $R\in SO(3)\setminus\Pi$,
\begin{align}\label{psiA_Z}
\psi(AR)=\frac{2(I-[\mathcal{Z}(R)]_\times)}{1+\|\mathcal{Z}(R)\|^2}\bar A\mathcal{Z}(R).
\end{align}
\end{lemma}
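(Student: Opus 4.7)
The plan is to parametrize $R\in SO(3)$ via the angle-axis map $\mathcal{R}_a(\theta,u)$ of \eqref{Rod_formula} and reduce all four claims to trigonometric identities in the angle $\theta$ and unit axis $u\in\mathbb{S}^2$. Using $\mathrm{tr}([u]_\times)=0$, $[u]_\times^2 = uu^\top - I$ (so $\mathrm{tr}([u]_\times^2)=-2$), and the symmetry of $[u]_\times^2$, I would first read off from \eqref{norm_R} that $|R|_I^2 = \sin^2(\theta/2)$ and that the skew part of $R$ is precisely $\sin\theta\,[u]_\times$, whence $\psi(R)=\sin\theta\,u$. Squaring gives $\|\psi(R)\|^2 = \sin^2\theta = 4|R|_I^2(1-|R|_I^2)$, while $R^2 = \mathcal{R}_a(2\theta,u)$ yields $|R^2|_I^2 = \sin^2\theta$, completing \eqref{R2_norm_theta}.

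For \eqref{bounds::VA}, I would substitute $I-R = -\sin\theta[u]_\times - (1-\cos\theta)[u]_\times^2$ into $\mathrm{tr}(A(I-R))$. The $\sin\theta$ term vanishes because $A$ is symmetric and $[u]_\times$ skew; using $[u]_\times^2 = uu^\top - I$ together with $A = \mathrm{tr}(A)I - 2\bar A$ collapses the rest to $\mathrm{tr}(A(I-R)) = 2(1-\cos\theta)\,u^\top\bar A u = 4|R|_I^2\,u^\top\bar A u$. The Rayleigh bounds $\lambda_{\min}^{\bar A}\le u^\top\bar A u\le \lambda_{\max}^{\bar A}$ then give \eqref{bounds::VA}.

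For \eqref{psiA_Z} I would extract the antisymmetric part of $AR$ using the commutator identity $A[u]_\times + [u]_\times A = [(\mathrm{tr}(A)I - A)u]_\times = [2\bar A u]_\times$ (valid for $A=A^\top$), together with $\mathrm{vex}(Auu^\top - uu^\top A) = u\times(Au) = -2[u]_\times \bar A u$. These two identities yield
\begin{equation*}
\psi(AR) = \sin\theta\,\bar A u - (1-\cos\theta)[u]_\times\bar A u.
\end{equation*}
Applying the half-angle formulas $\sin\theta = 2\tan(\theta/2)\cos^2(\theta/2)$ and $1-\cos\theta = 2\tan^2(\theta/2)\cos^2(\theta/2)$ and recognising $\mathcal{Z}(R) = \tan(\theta/2)u$ with $1+\|\mathcal{Z}(R)\|^2 = \sec^2(\theta/2)$ then reshapes this expression into exactly \eqref{psiA_Z}.

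Finally for \eqref{psiA_ineq} I would take norms of the displayed formula for $\psi(AR)$. The cross term vanishes because $[u]_\times$ is skew, and $\|[u]_\times \bar A u\|^2 = \|\bar A u\|^2 - (u^\top\bar A u)^2$ from $[u]_\times^\top[u]_\times = I-uu^\top$; the coefficients then collapse via $\sin^2\theta + (1-\cos\theta)^2 = 4|R|_I^2$ and $(1-\cos\theta)^2 = 4|R|_I^4$, giving
\begin{equation*}
\|\psi(AR)\|^2 = 4|R|_I^2\bigl[\|\bar A u\|^2 - |R|_I^2(u^\top\bar A u)^2\bigr].
\end{equation*}
The upper bound in \eqref{psiA_ineq} follows by replacing $\|\bar A u\|^2\le (\lambda_{\max}^{\bar A})^2$ and $(u^\top\bar A u)^2\ge (\lambda_{\min}^{\bar A})^2$. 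For the lower bound I would invoke Cauchy-Schwarz, $(u^\top\bar A u)^2\le \|u\|^2\|\bar A u\|^2 = \|\bar A u\|^2$, so the bracket is at least $(1-|R|_I^2)\|\bar A u\|^2\ge (\lambda_{\min}^{\bar A})^2(1-|R|_I^2)$, and dividing by $(2\lambda_{\max}^{\bar A})^2$ recovers the $\xi^2$ factor. The only non-routine step I anticipate is the symmetric-skew commutator identity used in the third paragraph; once that is verified by a direct index computation, everything else is routine trigonometry and Rayleigh-quotient bookkeeping.
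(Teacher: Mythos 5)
Your proposal is correct, and for \eqref{R2_norm_theta} and \eqref{bounds::VA} it coincides with the paper's argument (angle-axis parametrization, $\mathrm{tr}(A[u]_\times)=0$, $[u]_\times^2=uu^\top-I$, Rayleigh bounds). Where you diverge is in the last two claims: the paper proves \eqref{psiA_Z} and \eqref{psiA_ineq} entirely in the Cayley/Rodrigues-vector coordinates, computing $\mathbb{P}_a(AR)$ from \eqref{Cayley_formula} and then bounding $\|\psi(AR)\|^2$ by introducing the angle $\phi=\angle(\mathcal{Z}(R),\bar A\mathcal{Z}(R))$ and showing $\xi\leq\cos\phi\leq 1$; you instead stay in angle-axis form, obtain the intermediate identity $\psi(AR)=\sin\theta\,\bar A u-(1-\cos\theta)[u]_\times\bar A u$, convert to \eqref{psiA_Z} via half-angle formulas, and get \eqref{psiA_ineq} from the orthogonal decomposition $\|\psi(AR)\|^2=4|R|_I^2\big[\|\bar A u\|^2-|R|_I^2(u^\top\bar A u)^2\big]$ plus Rayleigh and Cauchy--Schwarz. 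Both routes hinge on the same commutator identity $A[u]_\times+[u]_\times A=[2\bar A u]_\times$ and the identity $u\times Au=-2[u]_\times\bar A u$, so the mathematical content is the same, but your version of \eqref{psiA_ineq} is arguably more transparent (no auxiliary angle $\phi$), and your derivation of \eqref{psiA_Z} cleanly produces the factor $2$ appearing in the lemma statement, which the paper's appendix computation actually drops in two places (it writes $[\bar A\mathcal{Z}]_\times$ where $[2\bar A\mathcal{Z}]_\times$ is needed, and then states \eqref{psiA_Z} without the $2$) --- so your argument doubles as a check that the statement, not the appendix, carries the correct constant.
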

\begin{proof}
See Appendix \ref{appendix::proof::lemma1}
\end{proof}
The following definition and characterization of Local-Input-to-State-Stability (LISS) property for nonlinear systems is needed throughout the paper and can be found in \cite{Sontag1996}.  Consider the system
\begin{equation}\label{nlsystem}
\dot x=f(x,u),
\end{equation}
where $f:\mathbb{R}^n\times\mathbb{R}^m\to\mathbb{R}^n$ is locally Lipschitz in $x$ and $u$. The input $u(t)$ is a piecewise continuous, bounded function of $t$ for all $t\geq 0$.
\begin{definition}\label{definition::ISS1}
System \ref{nlsystem} is said to be locally input-to-state stable if there exist $k_x,k_u>0,\gamma\in\mathcal{K},\beta\in\mathcal{K}\mathcal{L}$ such that
\begin{multline}
\|x(0)\|<k_x\;\mathrm{and}\;\sup_{t\geq 0}\|u(t)\|<k_u\Rightarrow\\
\|x(t)\|\leq\beta(\|x(0)\|,t)+\gamma\big(\sup_{t\geq 0}\|u(t)\|\big), \forall t\geq 0.
\end{multline}
\end{definition}
\begin{lemma}\label{lemma::ISS1}
Let $D\subset\mathbb{R}^n$ be a domain that contains the origin and $V:D\to\mathbb{R}$ be a continuously differentiable function such that
\begin{align}
\alpha_1(\|x\|)\leq V(x)\leq \alpha_2(\|x\|),\\
\dot V(x)\leq-\alpha_3(\|x\|),\forall \|x\|\geq\rho(\|u\|),
\end{align}
for all $\|x\|<r_x$ and $\|u\|<r_u$, where $\alpha_i, i=1,2,3$ and $\rho$ are class $\mathcal{K}$ functions. Then, the system \eqref{nlsystem} is locally input-to-state stable with $\gamma=\alpha_1^{-1}\circ\alpha_2\circ\rho$, $k_x=\alpha_2^{-1}\circ\alpha_1(r_x)$ and $k_u=\min\{\rho^{-1}(k_x),r_u\}$.
\end{lemma}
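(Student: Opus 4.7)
The proof is the standard Lyapunov characterization of LISS (cf.\ Khalil, Thm.~4.19) specialized to the local setting, so my plan is to follow the comparison-lemma argument while being careful about invariance of the ball $\{\|x\|<r_x\}$ in which the hypotheses actually hold. The overall strategy is: (i) show that for admissible $\|x(0)\|<k_x$ and $\|u\|_\infty<k_u$ the trajectory never leaves $\{\|x\|<r_x\}$; (ii) on the set where $\|x(t)\|\ge\rho(\|u\|_\infty)$ derive a $\mathcal{KL}$ decay of $V$ via the comparison lemma; and (iii) show that once $\|x(t)\|$ drops below $\rho(\|u\|_\infty)$ it stays in a sublevel set translating into the asymptotic gain $\gamma=\alpha_1^{-1}\circ\alpha_2\circ\rho$.

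The first key step is invariance. The choice $k_x=\alpha_2^{-1}\circ\alpha_1(r_x)$ yields $V(x(0))\le\alpha_2(\|x(0)\|)<\alpha_2(k_x)=\alpha_1(r_x)$, so $x(0)$ lies in the sublevel set $\Omega_c:=\{V\le c\}$ with $c=\alpha_1(r_x)$, and by $\alpha_1(\|x\|)\le V(x)$ this sublevel set is contained in $\{\|x\|\le r_x\}$. The choice $k_u\le\rho^{-1}(k_x)$ together with $k_x\le r_x$ guarantees $\rho(\|u\|_\infty)<k_x\le r_x$, so the ``input-driven'' ball is also inside the Lyapunov domain. Whenever the trajectory is on $\partial\Omega_c$ one has $\|x\|\ge\alpha_2^{-1}(c)=k_x>\rho(\|u\|_\infty)$, hence $\dot V\le-\alpha_3(\|x\|)<0$, which prevents escape from $\Omega_c$; thus $x(t)\in\Omega_c\subset\{\|x\|\le r_x\}$ for all $t\ge0$ and the hypotheses of the lemma remain valid along the whole trajectory.

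With invariance established, define $c_u:=\alpha_2(\rho(\|u\|_\infty))$ and consider the larger sublevel set $\Omega_{c_u}$. On $\Omega_c\setminus\Omega_{c_u}$ we have $V(x)>\alpha_2(\rho(\|u\|_\infty))$, which forces $\|x\|>\rho(\|u\|_\infty)$ (otherwise $V(x)\le\alpha_2(\|x\|)\le c_u$), so the decrease hypothesis applies and $\dot V\le-\alpha_3(\alpha_2^{-1}(V))=:-W(V)$ there, with $W$ of class $\mathcal{K}$. Applying the comparison lemma to $\dot V\le-W(V)$ produces a $\mathcal{KL}$ function $\sigma$ with $V(x(t))\le\sigma(V(x(0)),t)$ until the trajectory reaches $\Omega_{c_u}$; once inside $\Omega_{c_u}$ the argument in step (i) shows this set is positively invariant as well, giving the uniform bound $V(x(t))\le c_u$ thereafter. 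Combining the two regimes and sandwiching with $\alpha_1,\alpha_2$ yields
\begin{equation*}
\|x(t)\|\le\alpha_1^{-1}\!\big(\sigma(\alpha_2(\|x(0)\|),t)\big)+\alpha_1^{-1}\!\circ\!\alpha_2\!\circ\!\rho(\|u\|_\infty),
\end{equation*}
which is the LISS estimate with $\beta(r,t):=\alpha_1^{-1}(\sigma(\alpha_2(r),t))\in\mathcal{KL}$ and $\gamma=\alpha_1^{-1}\circ\alpha_2\circ\rho\in\mathcal{K}$, matching exactly the expressions in the statement.

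The main obstacle, and the only place where the \emph{local} nature of the hypotheses really bites, is step (i): one must verify that the bounds $\alpha_1\le V\le\alpha_2$ and $\dot V\le-\alpha_3$ (valid only for $\|x\|<r_x$ and $\|u\|<r_u$) are never used outside their domain of validity. The specific choices $k_x=\alpha_2^{-1}\circ\alpha_1(r_x)$ and $k_u=\min\{\rho^{-1}(k_x),r_u\}$ are precisely what is needed to make both the initial sublevel set $\Omega_c$ and the ``ultimate'' sublevel set $\Omega_{c_u}$ sit inside $\{\|x\|\le r_x\}$; without the second term in the $\min$ defining $k_u$ one could have $\|u\|_\infty\ge r_u$ and lose the decrease condition altogether. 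Apart from this bookkeeping, the argument reduces to routine applications of the comparison lemma and the definitions of class $\mathcal{K}$ and $\mathcal{KL}$.
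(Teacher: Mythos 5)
Your proof is correct and is the standard Lyapunov characterization of (local) ISS. Note that the paper itself offers no proof of this lemma: it is stated as a known result imported from the ISS literature (the text attributes the definition and characterization to Sontag's work, and the argument is essentially Khalil's Theorem 4.18--4.19 restricted to a ball), so there is nothing in the paper to compare against. Your write-up supplies exactly the bookkeeping that justifies the specific constants $k_x=\alpha_2^{-1}\circ\alpha_1(r_x)$ and $k_u=\min\{\rho^{-1}(k_x),r_u\}$ appearing in the statement; the only (standard, harmless) technicalities you gloss over are working with the strict sublevel set $\{V<c\}$ so that the hypotheses, valid only for $\|x\|<r_x$, are never invoked on the boundary, and minorizing $\alpha_3\circ\alpha_2^{-1}$ by a locally Lipschitz class $\mathcal{K}$ function before applying the comparison lemma.
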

       \section{The attitude estimation problem}
            Let $R\in SO(3)$ denote a rotation matrix from the body fixed-frame to a given inertial reference frame. The rotation matrix $R$ evolves according to the kinematic equation
        \begin{equation}\label{kinematic}
        \dot{R}=R[\omega]_\times,
        \end{equation}
        where $\omega\in\mathbb{R}^3$ is the angular velocity expressed in the body fixed-frame. Let $\omega_y(t)$ denote the angular velocity measurement (usually provided by a gyroscope) such that
            \begin{equation}\label{omega::measured}
            \omega_y(t)=\omega(t)+n_\omega(t),
            \end{equation}
            where $n_\omega(t)$ is \textit{a priori} bounded signal that captures the gyro-bias, measurements noise and all other disturbances. Attitude information is usually extracted from body-frame measurements of know reference vectors such as those obtained from accelerometers, magnetometers or star trackers. Two non-collinear vector measurements are usually sufficient to provide an algebraic reconstruction of the attitude matrix, namely $R_y(t)$ such that 
\begin{equation}\label{R::measured}
R_y(t)=N_R(t)R(t),
\end{equation}            
where $N_R(t)\in SO(3)$ is a rotation matrix that captures all the perturbations and measurement errors that are inherent to the attitude reconstruction procedure at hand. Many (static) attitude reconstruction schemes are available, see for instance the TRIAD \cite{black1964passive}, the SVD \cite{Markley1988} and the QUEST \cite{Shuster1981}. The reconstructed attitude $R_y(t)$ is not reliable in practical applications due to measurements noise and the limited bandwidth (and sometimes the poor quality) of the inertial sensors \cite{mahony2005complementary}. \\
            The goal of the attitude complementary filter, is to fuse the available gyro measurements together with the reconstructed attitude $R_y$ (or directly the inertial vector measurements) to obtain a good (filtered) attitude estimate $\hat R$. Note that, throughout this paper, we will use the terms `filter',  `estimator' and `observer' indistinguishably.

\section{Performance Analysis for Different Nonlinear Complimentary Filters on $SO(3)$ in the Absence of Measurement Errors}
In this section, we study the stability and performance of three different deterministic nonlinear complementary attitude filters on $SO(3)$ in the case of perfect measurements, \textit{i.e.} we consider 
\begin{equation}\label{assum1}
R_y(t)=R(t)\;\textrm{and}\;\omega_y(t)=\omega(t),\;\forall t\geq 0.
\end{equation}
Although, the introduction of a filter is not necessary under the above assumption, the study of the dynamics of a given filtering scheme is conducted in the error-free case to avoid the complexities introduced by considering random noise and disturbances in the sensors measurements. In the next section, however, we will study the robustness property of the proposed filtering methods in the presence of measurement errors.

 The first discussed filter is inspired from \cite{mahony2005complementary,Mahony2008} where we, for the sake of simplicity, ignore the angular velocity bias vector. The two other filters are similar in their sructure to the filter in \cite{Mahony2008} up to a state-dependent-gain in the filter innovation term. To this purpose, we derive closed-form solutions to the differential equations governing the attitude estimation error. We believe that these results as well as the analysis methods used to conclude the stability and the explicit performance of the filters are novel. 

Consider the following nonlinear complementary attitude filter on $SO(3)$ inspired from Mahony \textit{et al.} \cite{Mahony2008}:
            \begin{align}\label{Filter_I}
			\textrm{Filter I}\;\left\{\begin{array}{rcl}
             \dot{\hat R}&=&\hat R\left[\omega_y]_\times-[\sigma\right]_\times\hat R,\\
                 			\sigma&=&-\psi(AR_y\hat R^\top),
            \end{array}\right.
            \end{align}
            where $\hat{R}\in SO(3)$ is an estimate of $R$ with $\hat R(0)=\hat R_0\in SO(3)$ and $A$ is a symmetric matrix such that $\bar A:=\frac{1}{2}(\mathrm{tr}(A)I-A)$ is positive definite. The attitude estimator \eqref{Filter_I} falls under the category of gradient-based observers on Lie groups \cite{lageman2010gradient}. In fact, for $R_y\equiv R$, the observer innovation term $\sigma=-\psi(A\tilde R)$, where $\tilde R=R\hat R^\top$ represents the attitude estimation error, can be directly obtained from the gradient of the following smooth attitude potential function on $SO(3)$
\begin{equation}\label{UA}
            U_A(\tilde R)=\mathrm{tr}(A(I-\tilde R)),
\end{equation}
which is the well-known weighted trace function on $SO(3)$ that has been widely used in the literature for the design of attitude control systems \cite{berkane2015construction,Koditschek,Sanyal2009}. The attitude filter \eqref{Filter_I} has been used in many academic and industrial applications due to its proven almost global asymptotic stability, local exponential stability and its nice filtering properties.  In the following theorem we give (for the first time) the explicit solution of the attitude estimator \eqref{Filter_I}.
\begin{theorem}\label{theorem::explicit1}
Consider the attitude kinematics system \eqref{kinematic} coupled with the attitude observer  \eqref{Filter_I} under assumption \eqref{assum1}. Then, 
\begin{itemize}
\item The closed-loop system has at least four equilibria characterized by
$
\{I\}\cup\mathcal{R}_a(\pi,\mathcal{E}(A))
$
where $\mathcal{E}(A)$ is the set of all unit eigenvectors of $A$.
\item The set of all rotations of angle $\pi$, defined by $\Pi=\{\tilde R\in SO(3)\mid |\tilde R|_I=1\}$, is invariant and non-attractive.
\item For any $\tilde R(0)\in SO(3)\setminus\Pi$, one has
\begin{equation}\label{tilde_R_explicit}
\tilde R(t)=\mathcal{R}_r\big(e^{-\bar At}\mathcal{Z}(\tilde R(0))\big),\;\forall t\geq 0.
\end{equation}
\end{itemize}
\end{theorem}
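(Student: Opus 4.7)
My plan is to work with the attitude estimation error $\tilde R := R\hat R^\top$. Differentiating and substituting \eqref{kinematic}, \eqref{Filter_I} together with \eqref{assum1}, the $\omega$-dependent terms cancel and the closed-loop error dynamics collapse to the gradient-type matrix ODE $\dot{\tilde R} = -\tilde R[\psi(A\tilde R)]_\times$. Equilibria are then characterized by $\psi(A\tilde R)=0$, i.e., $A\tilde R$ being symmetric. This is trivially satisfied at $\tilde R=I$, and, using the angle-axis formula \eqref{Rod_formula} with $\theta=\pi$, also at $\tilde R = 2uu^\top - I$ for each unit eigenvector $u \in \mathcal{E}(A)$ (since $Au=\lambda u$ makes $A\tilde R = 2\lambda uu^\top - A$ symmetric). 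This yields the first bullet.

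The heart of the theorem is the closed-form solution outside $\Pi$. My plan is to transform the matrix ODE into a linear ODE on $\mathbb{R}^3$ via the Rodrigues parameters $Z := \mathcal{Z}(\tilde R)$, which forms a diffeomorphism onto $SO(3)\setminus\Pi$. Feeding \eqref{psiA_Z} of Lemma \ref{lemma::1} into the derivative rule \eqref{dZ} (applied with $\xi = -\psi(A\tilde R)$) gives
\begin{equation*}
\dot Z = -\frac{(I + [Z]_\times + ZZ^\top)(I - [Z]_\times)}{1 + \|Z\|^2}\,\bar A\,Z.
\end{equation*}
The key algebraic step — and the main obstacle — is the striking cancellation $(I+[Z]_\times+ZZ^\top)(I-[Z]_\times) = (1+\|Z\|^2)I$, which reduces to expanding the product and using the standard identities $[Z]_\times^2 = ZZ^\top - \|Z\|^2 I$ together with $ZZ^\top[Z]_\times = 0$ (the latter from $Z^\top(Z\times y)=0$). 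Once this is in place, the nonlinear dynamics collapse to the linear ODE $\dot Z = -\bar A Z$, whose solution is $Z(t) = e^{-\bar A t}\mathcal{Z}(\tilde R(0))$, and applying the diffeomorphism $\mathcal{R}_r$ to both sides yields \eqref{tilde_R_explicit}.

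For invariance of $\Pi$, I would parametrize points of $\Pi$ as $\tilde R = 2uu^\top - I$ with $u\in\mathbb{S}^2$. A short computation then gives $\psi(A\tilde R) = u\times Au$, and one verifies that the curve $\tilde R(t)=2u(t)u(t)^\top - I$ with $u(t)$ evolving on $\mathbb{S}^2$ according to $\dot u = \tfrac{1}{2}((u\times Au)\times u)$ satisfies the full matrix ODE; uniqueness of solutions to the locally Lipschitz vector field then forces $\tilde R(t)\in\Pi$ for all $t$. Finally, non-attractiveness of $\Pi$ comes for free from the explicit solution on the complement: any $\tilde R(0)\in SO(3)\setminus\Pi$ has finite $\|Z(0)\|$ and, since $\bar A \succ 0$, $\|e^{-\bar A t}Z(0)\|\to 0$, so $\tilde R(t)\to I$. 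Thus trajectories initialized arbitrarily close to $\Pi$ (with large but finite $\|Z(0)\|$) leave every small neighborhood of $\Pi$ and flow to the identity, ruling out local attractiveness.
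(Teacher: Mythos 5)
Your derivation of the error dynamics and, above all, of the closed-form solution follows exactly the paper's route: the authors also pass to the Rodrigues vector via \eqref{dZ} and \eqref{psiA_Z} and rely on the same cancellation $(I+[Z]_\times+ZZ^\top)(I-[Z]_\times)=(1+\|Z\|^2)I$ (written there as $I-[Z]_\times^2+ZZ^\top$ after dropping $ZZ^\top[Z]_\times=0$) to collapse the dynamics to $\dot Z=-\bar A Z$. Where you genuinely differ is the second bullet. The paper computes $\frac{d}{dt}|\tilde R|_I^2=-\mathrm{tr}(A(I-\tilde R^2))/8$ and sandwiches it as in \eqref{dtilde_R_ineq}; since both bounds vanish on $\Pi$ and are strictly negative off it, a single comparison argument yields invariance of $\Pi$, forward invariance of its complement, and non-attractiveness all at once, and this same inequality is reused for Filters II and III. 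You instead prove invariance by exhibiting an explicit flow $\dot u=\tfrac12((u\times Au)\times u)$ on the axis sphere -- which checks out: for $\tilde R=2uu^\top-I$ one indeed has $\psi(A\tilde R)=u\times Au$, and the curve $2u(t)u(t)^\top-I$ satisfies $\dot{\tilde R}=\tilde R[\sigma]_\times$ -- and you get non-attractiveness from the explicit solution. Both are valid; yours is more constructive (it even identifies the reduced dynamics on $\Pi$), while the paper's Lyapunov-type inequality is lighter and does double duty later. For the equilibria, your direct verification that $A(2uu^\top-I)$ is symmetric when $Au=\lambda u$ suffices for the ``at least four'' claim; the paper cites an external lemma that also gives the converse characterization.

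One small point to make explicit in your ordering: integrating $\dot Z=-\bar A Z$ to get $Z(t)=e^{-\bar At}Z(0)$ for all $t\ge 0$ presupposes that $\tilde R(t)$ never reaches $\Pi$, where the chart $\mathcal{Z}$ blows up; the paper secures this first via \eqref{dtilde_R_ineq}. You should add the standard continuation argument: on the maximal interval where the chart is valid, $\|Z(t)\|=\|e^{-\bar At}Z(0)\|\le\|Z(0)\|$ stays bounded, whereas $\|\mathcal{Z}(\tilde R)\|\to\infty$ as $\tilde R\to\Pi$, so that interval is all of $[0,\infty)$. With that sentence in place, your use of the explicit solution to conclude non-attractiveness is no longer circular.
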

\begin{proof}
The proof of the first two items of Theorem \ref{theorem::explicit1} can be directly deduced from \cite[Proposition 1]{tayebiACC2011}. Nevertheless, for the sake of completeness, we prove these properties again. Using the product rule, and in view of \eqref{kinematic} and \eqref{Filter_I}, one obtains
      \begin{align}\nonumber
        \dot{\tilde R}&=\dot R\hat R^\top-R\hat R^\top\dot{\hat R}\hat R^\top\\
        				\nonumber	&=R[\omega]_\times\hat R^\top-R\left[\omega\right]_\times\hat R^\top+R\hat R^\top[\sigma]_\times\\\label{dtilde_R}
        					&=\tilde R[\sigma]_\times,
        \end{align}
        where the fact that $[u]_\times P^\top=P^\top[Pu]_\times$, for all $u\in\mathbb{R}^3$ and $P\in SO(3)$, has been used to obtain the last equality.
The equilibria of the closed loop-system are characterized by $\sigma=-\psi(A\tilde R)=-\mathrm{vex}(\mathbb{P}_a(A\tilde R))=0$ which by \cite[Lemma 2]{berkane2015construction} implies that $\tilde R\in \{I\}\cup\mathcal{R}_a(\pi,\mathcal{E}(A))$. Now, since $|\tilde R|_I^2=\mathrm{tr}(I-\tilde R)/4$, it follows that the time derivative of $|\tilde R|_I^2$ along the trajectories of \eqref{dtilde_R} satisfies
 \begin{align}\nonumber
\frac{d}{dt}|\tilde R|_I^2&=-\mathrm{tr}(\tilde R[\sigma]_\times)/4\\\nonumber
									 &=\mathrm{tr}(\tilde R\mathbb{P}_a(A\tilde R))/4\\\nonumber
									 &=\mathrm{tr}(\tilde R(A\tilde R-\tilde R^\top A))/8\\\label{dtilde_R_norm}
 									&=-\mathrm{tr}(A(I-\tilde R^2))/8.
\end{align}
Therefore, in view of \eqref{R2_norm_theta}-\eqref{bounds::VA}, it follows that
\begin{equation}\label{dtilde_R_ineq}
-2\lambda_{\max}^{\bar A}(1-|\tilde R|_I^2)|\tilde R|_I^2\leq\frac{d}{dt}|\tilde R|_I^2\leq-2\lambda_{\min}^{\bar A}(1-|\tilde R|_I^2)|\tilde R|_I^2,
\end{equation}
which shows that the set $\Pi$ is forward invariant and a repeller.

Now, assume that $\tilde R(0)\in SO(3)\setminus\Pi$ which implies, in view of the fact that $\Pi$ is a repeller, that $\tilde R(t)\in SO(3)\setminus\Pi$ for all future time $t\geq 0$. Therefore, the inverse map $\mathcal{Z}(\tilde R)$, defined in \eqref{Rod_vec}, exists for all $t\geq 0$ such that one has $\mathcal{R}_r(\mathcal{Z}(\tilde R(t)))=\tilde R(t)$. Making use of \eqref{dZ}, \eqref{psiA_Z} and \eqref{dtilde_R} one obtains
\begin{align}\nonumber
&\frac{d}{dt}\mathcal{Z}(\tilde R)\\\nonumber
&=\frac{1}{2}\big(I+[\mathcal{Z}(\tilde R)]_\times+\mathcal{Z}(\tilde R)\mathcal{Z}(\tilde R)^\top\big)\sigma\\\nonumber
												&=-\frac{1}{2}\big(I+[\mathcal{Z}(\tilde R)]_\times+\mathcal{Z}(\tilde R)\mathcal{Z}(\tilde R)^\top\big)\psi(A\tilde R)\\\nonumber
												&=-\big(I+[\mathcal{Z}(\tilde R)]_\times+\mathcal{Z}(\tilde R)\mathcal{Z}(\tilde R)^\top\big)\frac{(I-[\mathcal{Z}(\tilde R)]_\times)}{1+\|\mathcal{Z}(\tilde R)\|^2}\bar A\mathcal{Z}(\tilde R)\\\nonumber
												&=-\frac{1}{(1+\|\mathcal{Z}(\tilde R)\|^2)}\big(I-[\mathcal{Z}(\tilde R)]_\times^2+\mathcal{Z}(\tilde R)\mathcal{Z}(\tilde R)^\top\big)\bar A\mathcal{Z}(\tilde R)\\
												&=-\bar A\mathcal{Z}(\tilde R),\label{dtilde_Z}
\end{align}
where we have used the fact that $[\mathcal{Z}(\tilde R)]_\times^2=-\|\mathcal{Z}(\tilde R)\|^2I+\mathcal{Z}(\tilde R)\mathcal{Z}(\tilde R)^\top$ to obtain the last equality. By simple integration of \eqref{dtilde_Z}, it follows that
\begin{align}\label{Z:explicit}
\mathcal{Z}(\tilde R(t))=e^{-\bar At}\mathcal{Z}(\tilde R(0)),
\end{align}
for all $t\geq 0$, which yields \eqref{tilde_R_explicit}.
\end{proof}
Theorem \ref{theorem::explicit1} provides an explicit solution for the attitude estimator of Mahony \textit{et al.} \cite{Mahony2008}, given by equations \eqref{Filter_I}, in the absence of measurement errors. Equation \eqref{dtilde_Z} shows that the Rodrigues vector associated to the attitude estimation error follows the dynamics of a linear time-invariant system with negative definite state matrix. The three-parameters Rogriguez vector decays, therefore, exponentially fast and the explicit solution for the linear system can be derived as in \eqref{Z:explicit}. The corresponding attitude error matrix is subsequently obtained via the Cayley's formula \eqref{tilde_R_explicit}. It is worth pointing out that, although the Rodrigues vector is converging exponentially to zero, the attitude estimation error does not necessary converge exponentially fast as well. The convergence property of the norm of the attitude error is given in the following corollary.
\begin{cor}\label{corollary1}
Consider the attitude kinematics system \eqref{kinematic} coupled with the attitude observer  \eqref{Filter_I} under assumption \eqref{assum1}. Then, for any $\tilde R(0)\in SO(3)\setminus\Pi$, the Euclidean distance of the attitude error $\tilde R$ on $SO(3)$ is given by
\begin{equation}\label{corollary1::equation}
|\tilde R(t)|_I^2=\frac{\psi(\tilde R(0))^\top e^{-2\bar At}\psi(\tilde R(0))}{4(1-|\tilde R(0)|_I^2)^2+\psi(\tilde R(0))^\top e^{-2\bar At}\psi(\tilde R(0))},
\end{equation}
for all $t\geq 0$.
\end{cor}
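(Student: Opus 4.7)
The plan is to reduce everything to the explicit linear evolution of the Rodrigues vector obtained in Theorem \ref{theorem::explicit1}, and then convert back to the Euclidean distance $|\tilde R|_I$ by means of the algebraic identities provided in Lemma \ref{lemma::1} and equation \eqref{Rod_vec}. Since $\tilde R(0)\in SO(3)\setminus\Pi$, Theorem \ref{theorem::explicit1} guarantees that $\tilde R(t)$ stays in $SO(3)\setminus\Pi$ for all $t\geq 0$, so $\mathcal{Z}(\tilde R(t))$ is well defined throughout.

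First I would record the key scalar identity relating the Euclidean distance to the Rodrigues vector. Starting from \eqref{Rod_vec}, $\mathcal{Z}(R)=\psi(R)/(2(1-|R|_I^2))$, so
\begin{equation*}
\|\mathcal{Z}(R)\|^2=\frac{\|\psi(R)\|^2}{4(1-|R|_I^2)^2}=\frac{|R|_I^2}{1-|R|_I^2},
\end{equation*}
where the second equality is \eqref{R2_norm_theta} from Lemma \ref{lemma::1}. Solving this for $|R|_I^2$ yields the clean formula $|R|_I^2=\|\mathcal{Z}(R)\|^2/(1+\|\mathcal{Z}(R)\|^2)$, which is the bridge I need between the two representations.

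Next, I would invoke the closed-form solution \eqref{Z:explicit} from Theorem \ref{theorem::explicit1} to write $\mathcal{Z}(\tilde R(t))=e^{-\bar A t}\mathcal{Z}(\tilde R(0))$. Since $\bar A$ is symmetric, $e^{-\bar A t}$ is symmetric too, and hence
\begin{equation*}
\|\mathcal{Z}(\tilde R(t))\|^2=\mathcal{Z}(\tilde R(0))^\top e^{-2\bar A t}\mathcal{Z}(\tilde R(0)).
\end{equation*}
Substituting $\mathcal{Z}(\tilde R(0))=\psi(\tilde R(0))/(2(1-|\tilde R(0)|_I^2))$ turns the right-hand side into $\psi(\tilde R(0))^\top e^{-2\bar A t}\psi(\tilde R(0))/(4(1-|\tilde R(0)|_I^2)^2)$.

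Finally, I would plug this expression into $|\tilde R(t)|_I^2=\|\mathcal{Z}(\tilde R(t))\|^2/(1+\|\mathcal{Z}(\tilde R(t))\|^2)$ and multiply numerator and denominator by $4(1-|\tilde R(0)|_I^2)^2$ to obtain \eqref{corollary1::equation}. No new analytical obstacle arises; the entire content of the corollary is an algebraic post-processing of the closed-form evolution of $\mathcal{Z}(\tilde R)$. The only step requiring a little care is ensuring $\tilde R(t)\notin\Pi$ so the Rodrigues parametrization (and thus the map $|R|_I^2\leftrightarrow\|\mathcal{Z}(R)\|^2$) is valid for all $t\geq 0$, which is already guaranteed by the repeller property of $\Pi$ established in Theorem \ref{theorem::explicit1}.
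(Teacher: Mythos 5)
Your proposal is correct and matches the paper's own proof essentially step for step: both use the identity $\|\mathcal{Z}(\tilde R)\|^2=\|\psi(\tilde R)\|^2/(4(1-|\tilde R|_I^2)^2)=|\tilde R|_I^2/(1-|\tilde R|_I^2)$ together with the closed-form solution $\mathcal{Z}(\tilde R(t))=e^{-\bar At}\mathcal{Z}(\tilde R(0))$ from Theorem \ref{theorem::explicit1}, and then solve algebraically for $|\tilde R(t)|_I^2$. Your explicit remarks about the symmetry of $e^{-\bar At}$ and the forward invariance of $SO(3)\setminus\Pi$ are correct and only make more careful what the paper leaves implicit.
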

\begin{proof}
In view of \eqref{Rod_vec}, \eqref{R2_norm_theta} it follows that 
\begin{align}\label{pf::cor1::eq1}
\|\mathcal{Z}(\tilde R(t))\|^2&=\frac{\|\psi(\tilde R(t))\|^2}{4(1-|\tilde R(t)|_I^2)}=\frac{|\tilde R(t)|_I^2}{1-|\tilde R(t)|_I^2}.
\end{align}
On the other hand, using the result of Theorem \ref{theorem::explicit1}, one has
\begin{align}\nonumber
\|\mathcal{Z}(\tilde R(t))\|^2&=\|e^{-\bar At}\mathcal{Z}(\tilde R(0))\|^2\\
										   &=\frac{\psi(\tilde R(0))^\top e^{-2\bar At}\psi(\tilde R(0))}{4(1-|\tilde R(0)|_I^2)^2},\label{pf::cor1::eq2}
\end{align}
where \eqref{Rod_vec} has been used to obtain the last equality. Simple algebraic manipulation of \eqref{pf::cor1::eq1} and \eqref{pf::cor1::eq2} yields \eqref{corollary1::equation}.
\end{proof}
Corollary \ref{corollary1} provides an explicit expression showing the evolution of the Euclidean distance $|\tilde R(t)|_I$ with respect to time. Note that it is not difficult to show that the vector $\psi(\tilde R)=\sin(\theta)u$ where $\tilde R=\mathcal{R}_a(\theta,u)$. Therefore, from \eqref{corollary1::equation}, for the same initial attitude error angle, initial attitude errors with rotation axis $u(0)$ in the direction of the larger spectrum of $\bar A$ tends to generate larger attitude errors $|\tilde R(t)|_I$ compared to an initial attitude error with rotation axis $u(0)$ in the direction of a smaller spectrum (eigenvalue) of $\bar A$. The study of the effect of the initial attitude angle on the performance of the attitude filter \eqref{Filter_I} is provided in the result of the following Corollary.
\begin{cor}\label{corollary2}
Consider the attitude kinematics system \eqref{kinematic} coupled with the attitude observer  \eqref{Filter_I} under assumption \eqref{assum1}. Then, for any $\tilde R(0)\in SO(3)\setminus\Pi$, the attitude estimation error satisfies
\begin{equation}\label{corollary2::inequality}
\underline{\beta}(|\tilde R(0)|_I,t)\leq|\tilde R(t)|_I\leq\bar\beta(|\tilde R(0)|_I,t),
\end{equation}
for all $t\geq 0$, such that $\underline{\beta}$ and $\bar\beta$ are given by
\begin{align*}
\bar{\beta}(s,t)&=\frac{se^{-\lambda_{\min}^{\bar A}t}}{\big(1-s^2(1-e^{-2\lambda_{\min}^{\bar A}t})\big)^\frac{1}{2}},\\
\underline\beta(s,t)&=\frac{se^{-\lambda_{\max}^{\bar A}t}}{\big(1-s^2(1-e^{-2\lambda_{\max}^{\bar A}t})\big)^\frac{1}{2}}.
\end{align*}
\end{cor}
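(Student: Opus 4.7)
The plan is to start directly from the closed-form expression in Corollary \ref{corollary1} and sandwich the quadratic form $\psi(\tilde R(0))^\top e^{-2\bar A t}\psi(\tilde R(0))$ between its extremal eigenvalue bounds, then convert these into the stated bounds on $|\tilde R(t)|_I$ by monotonicity.

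First, since $\bar A$ is symmetric positive definite, so is $e^{-2\bar A t}$ for every $t\geq 0$, and its eigenvalues are exactly $e^{-2\lambda_i^{\bar A}t}$. In particular its minimum and maximum eigenvalues are $e^{-2\lambda_{\max}^{\bar A}t}$ and $e^{-2\lambda_{\min}^{\bar A}t}$, respectively. Applying the Rayleigh--Ritz inequality to the symmetric matrix $e^{-2\bar A t}$ gives
\begin{equation*}
e^{-2\lambda_{\max}^{\bar A}t}\|\psi(\tilde R(0))\|^2 \leq \psi(\tilde R(0))^\top e^{-2\bar A t}\psi(\tilde R(0)) \leq e^{-2\lambda_{\min}^{\bar A}t}\|\psi(\tilde R(0))\|^2.
\end{equation*}

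Next, observe that the scalar map $x\mapsto x/(c+x)$ is strictly increasing on $[0,\infty)$ for any fixed $c>0$. Taking $c=4(1-|\tilde R(0)|_I^2)^2$ and letting $x$ range over the quadratic form above, I can substitute these eigenvalue bounds directly into the expression given by Corollary \ref{corollary1}, preserving the inequality in both directions. Writing $s:=|\tilde R(0)|_I$, this yields
\begin{equation*}
\frac{e^{-2\lambda_{\max}^{\bar A}t}\|\psi(\tilde R(0))\|^2}{4(1-s^2)^2+e^{-2\lambda_{\max}^{\bar A}t}\|\psi(\tilde R(0))\|^2}\leq |\tilde R(t)|_I^2\leq \frac{e^{-2\lambda_{\min}^{\bar A}t}\|\psi(\tilde R(0))\|^2}{4(1-s^2)^2+e^{-2\lambda_{\min}^{\bar A}t}\|\psi(\tilde R(0))\|^2}.
\end{equation*}

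Finally, I invoke identity \eqref{R2_norm_theta} from Lemma \ref{lemma::1}, which gives $\|\psi(\tilde R(0))\|^2=4s^2(1-s^2)$. Substituting and cancelling the common factor $4(1-s^2)$ in numerator and denominator of both bounds simplifies each denominator to $1-s^2(1-e^{-2\lambda t})$ with the appropriate eigenvalue, producing exactly $\underline\beta(s,t)^2$ and $\bar\beta(s,t)^2$. Taking square roots yields \eqref{corollary2::inequality}. The only delicate point is to verify that the denominators $1-s^2(1-e^{-2\lambda t})$ stay strictly positive so that the square root is real, but this is immediate since $s\in[0,1)$ for $\tilde R(0)\in SO(3)\setminus\Pi$ and $1-e^{-2\lambda t}<1$, so the whole proof is a short algebraic manipulation with no genuine obstacle.
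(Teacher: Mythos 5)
Your proposal is correct and follows essentially the same route as the paper's proof: a Rayleigh--Ritz bound on the quadratic form $\psi(\tilde R(0))^\top e^{-2\bar At}\psi(\tilde R(0))$, monotonicity of $x\mapsto x/(x+c)$ applied to the expression from Corollary \ref{corollary1}, and the identity $\|\psi(\tilde R(0))\|^2=4|\tilde R(0)|_I^2(1-|\tilde R(0)|_I^2)$ from \eqref{R2_norm_theta} to simplify. The only cosmetic difference is that you substitute the $\psi$-norm identity at the end rather than immediately after the eigenvalue bound, which changes nothing of substance.
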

\begin{proof}
First, it should be noted that the matrix $e^{-\bar At}$ is positive definite due to the fact that the matrix $-\bar At$ is symmetric. Moreover, the eigenvalues of the matrix $e^{-\bar At}$ are given by $e^{-\lambda_i^{\bar A}t},i=1,2,3,$ where $\lambda_i^{\bar A},i=1,2,3,$ are the eigenvalues of $\bar A$. Hence, in view of \eqref{R2_norm_theta}, one has
\begin{align*}
\psi(\tilde R(0))^\top e^{-2\bar At}\psi(\tilde R(0))&\leq e^{-2\lambda_{\min}^{\bar A}t}\|\psi(\tilde R(0))\|^2\\
																		    &\leq 4e^{-2\lambda_{\min}^{\bar A}t}|\tilde R(0)|_I^2(1-|\tilde R(0)|_I^2),
\end{align*}
which, in view of \eqref{corollary1::equation} and the fact that the map $x\to x/(x+a)$ is non-decreasing for all $a\geq 0$, implies that
$$
|\tilde R(t)|_I^2\leq \frac{e^{-2\lambda_{\min}^{\bar A}t}|\tilde R(0)|_I^2}{1-|\tilde R(0)|_I^2+e^{-2\lambda_{\min}^{\bar A}t}|\tilde R(0)|_I^2}=\big(\bar\beta(|\tilde R(0)|_I,t)\big)^2.
$$
Following similar steps as above, the following lower bound can be derived
$$
|\tilde R(t)|_I^2\geq \frac{e^{-2\lambda_{\max}^{\bar A}t}|\tilde R(0)|_I^2}{1-|\tilde R(0)|_I^2+e^{-2\lambda_{\max}^{\bar A}t}|\tilde R(0)|_I^2}=\big(\underline\beta(|\tilde R(0)|_I,t)\big)^2.
$$
\end{proof}
According to the upper bound on the estimation error given in Corollary \ref{corollary2}, it is clear that for small initial conditions, \textit{i.e.,} $|\tilde R(0)|_I\ll 1$, the attitude estimation error satisfies $|\tilde R(t)|_I\leq|\tilde R(0)|_I\exp(-\lambda_{\min}^{\bar A}t)$ which confirms the local exponential stability of the equilibrium point $|\tilde R|_I=0$ proved in \cite{Mahony2008}. Moreover, the convergence rate of the filter is given in the following corollary
\begin{cor}\label{corollary3}
Starting from any initial condition $\tilde R(0)\in SO(3)\setminus\Pi$, the time $t_B$ necessary to enter the ball of radius $|\tilde R(t)|_I=B$ satisfies
\begin{align}
t_B\geq\frac{1}{\lambda_{\max}^{\bar A}}\ln\left(\frac{|\tilde R(0)|_I(1-B^2)^{\frac{1}{2}}}{B(1-|\tilde R(0)|_I^2)^{\frac{1}{2}}}\right).
\end{align}
\end{cor}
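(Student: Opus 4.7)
The plan is to derive Corollary \ref{corollary3} directly from the lower bound established in Corollary \ref{corollary2}. The key observation is that if $t_B$ is the first time at which $|\tilde R(t_B)|_I \leq B$, then in particular $|\tilde R(t_B)|_I \leq B$, and we may combine this with the lower bound $\underline\beta(|\tilde R(0)|_I,t_B)\leq|\tilde R(t_B)|_I$ to obtain a scalar inequality involving only $t_B$, $B$, and the initial value $|\tilde R(0)|_I$.

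To make this concrete, I would first substitute $|\tilde R(t_B)|_I \leq B$ into the left inequality of \eqref{corollary2::inequality} to get
\begin{equation*}
\frac{|\tilde R(0)|_I e^{-\lambda_{\max}^{\bar A}t_B}}{\bigl(1-|\tilde R(0)|_I^2(1-e^{-2\lambda_{\max}^{\bar A}t_B})\bigr)^{1/2}} \leq B.
\end{equation*}
Setting $x := e^{-\lambda_{\max}^{\bar A}t_B}$ and $s := |\tilde R(0)|_I$, squaring, and clearing the positive denominator produces the polynomial inequality $s^2 x^2 \leq B^2\bigl(1 - s^2 + s^2 x^2\bigr)$, which rearranges to
\begin{equation*}
x^2 \leq \frac{B^2(1-s^2)}{s^2(1-B^2)}.
\end{equation*}
Taking square roots (noting $B<1$ so the right-hand side is well-defined and positive) and then logarithms — with a sign flip since $\ln$ is applied to $x = e^{-\lambda_{\max}^{\bar A}t_B}$ — yields the claimed bound on $t_B$.

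The only step that requires a small note of care is the implicit assumption that the statement is nontrivial only when $B < |\tilde R(0)|_I$; otherwise $t_B = 0$ and the logarithm on the right-hand side is non-positive, so the inequality is vacuous. I would also remark that the map $x\mapsto x/\sqrt{1-s^2+s^2 x^2}$ is strictly increasing in $x$, which is what makes the algebraic inversion valid. There is no real obstacle here — the proof is essentially a direct inversion of $\underline\beta(s,\cdot)$ — so the write-up can be kept to a few lines referencing Corollary \ref{corollary2}.
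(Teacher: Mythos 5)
Your proposal is correct and follows essentially the same route as the paper: apply the lower bound $\underline\beta(|\tilde R(0)|_I,t_B)\leq B$ from Corollary \ref{corollary2}, clear denominators to isolate $e^{-2\lambda_{\max}^{\bar A}t_B}$, and take logarithms. Your explicit handling of the squaring step in fact fixes a small typo in the paper's intermediate display (which writes $e^{-\lambda_{\max}^{\bar A}t_B}$ where $e^{-2\lambda_{\max}^{\bar A}t_B}$ is meant), and your remark on the vacuous case $B\geq|\tilde R(0)|_I$ is a harmless addition.
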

\begin{proof}
Using the lower bound of \eqref{corollary2::inequality}, the time $t_B$ needs to satisfy the constraint
\begin{equation*}
\underline{\beta}(|\tilde R(0)|_I,t_B)\leq |\tilde R(t)|_I=B.
\end{equation*}
Using straightforward algebraic manipulations, the above inequality reads
\begin{align*}
e^{-\lambda_{\max}^{\bar A}t_B}\leq \frac{B^2(1-|\tilde R(0)|_I^2)}{|\tilde R(0)|_I^2(1-B^2)},
\end{align*}
which leads to the result of the corollary by taking the $\ln(\cdot)$ function on both sides of the above inequality.
\end{proof}
Therefore, according to Corollary \ref{corollary3}, it is clear that large initial estimation errors, \textit{i.e.,} $|\tilde{R}(0)|_I \to 1$, will result in low convergence rates. This fact has been numerically and experimentally observed in recent works such as \cite{lee2012,lee2015observer,zlotnik2017nonlinear}

In a tentative to improve the convergence rate of this class of attitude observers, we introduce a state-dependent scalar gain function $k: SO(3)\to\mathbb{R}^+\setminus\{0\}$, into the observer innovation term such that $\sigma_k=-k(\tilde R)\psi(A\tilde R)$ under the following assumptions:
\begin{itemize}
\item  The scalar function $k(\cdot)$ is strictly positive on $SO(3)$.
\item  The scalar function $k(\cdot)$ is a priori \textit{bounded} function on $SO(3)$.
\item  The scalar function $k(\cdot)$ is large enough for large attitude errors $\tilde R$ (for errors such that $|\tilde R|_I\to 1$).
\end{itemize}
The assumption that $k(\tilde R)$ is strictly positive is necessary to preserve the stability and convergence of the nonlinear complementary filter. In fact, following similar steps as in \eqref{dtilde_R_norm}-\eqref{dtilde_R_ineq} one can show that, with the introduction of the new innovation term $\sigma_k$, one has $d|\tilde R|_I^2/dt\leq-2\lambda_{\min}^{\bar A}k(\tilde R)(1-|\tilde R|_I^2)|\tilde R|_I^2\leq-2\lambda_{\min}^{\bar A}\underline{k}(1-|\tilde R|_I^2)|\tilde R|_I^2$ where $\underline{k}>0$ is a lower bound on $k(\tilde R)$. The assumption that $k(\tilde R)$ is bounded for all $\tilde R\in SO(3)$ is needed for practical implementation while the assumption that $k(\tilde R)$ is large for large attitude errors is introduced to increase the convergence rate for large errors. Note that the innovation term $\sigma$ of Filter I is equals $\sigma_{k_1}$ with a gain function $k_1(\tilde R)=1$ for all $\tilde R\in SO(3)$.

The nonlinear complementary filter which has been proposed recently in \cite{zlotnik2017nonlinear} can be obtain by taking the innovation term $\sigma_k$ with the gain function $k(\tilde R)=[4\lambda_{\min}^{\bar A}-U_A(\tilde R)]^{-\frac{1}{2}}$ where $U_A$ is given by \eqref{UA} for some matrix $A$. Although, the resulting filter in \cite{zlotnik2017nonlinear} guarantees faster convergence rates compared to \cite{Mahony2008} and is written directly in terms of vector measurements (no need for rotation matrix reconstruction), the implementation of \cite{zlotnik2017nonlinear} is constrained on the \textit{ellipsoid-like} set
\begin{equation}
\mathcal{S}_{\min}=\{\tilde R\in SO(3)\mid U_A(\tilde R)<4\lambda_{\min}^{\bar A}\}.
\end{equation}
In view of \eqref{bounds::VA}, the largest ball contained in the above defined ellipsoid is given by
\begin{equation}
\mathcal{B}_\xi=\{\tilde R\in SO(3)\mid |\tilde R|_I^2<\xi\},
\end{equation}
where $\xi=\lambda_{\min}^{\bar A}/\lambda_{\max}^{\bar A}$. The ball $B_\xi$ shrinks as the spread of the eigenvalues gets larger, \textit{i.e.,} as $\xi$ gets smaller. This fact limits the applicability of the attitude estimation scheme \cite{zlotnik2017nonlinear} when the eigenvalues spread $\xi$ is small; at least without considering farther modifications. Another possible choice is to consider the following gain function instead
$
k(\tilde R)=[4\lambda_{\max}^{\bar A}-U_A(\tilde R)]^{-\frac{1}{2}},
$
which is well defined on the set
\begin{equation}
\mathcal{S}_{\max}=\{\tilde R\in SO(3)\mid U_A(\tilde R)<4\lambda_{\max}^{\bar A}\},
\end{equation}
which, in view of \eqref{bounds::VA}, contains the set of all attitude errors except attitude errors of angle $\pi$, defined by the ball $\mathcal{B}_1=SO(3)\setminus\Pi$. Note that when $A$ has distinct eigenvalues, the set $\mathcal{S}_{\max}$ reduces to the whole space $SO(3)$ except only one single rotation given by $\mathcal{R}_a(\pi,v_{\max})$ with $v_{\max}$ being the unit eigenvector corresponding to the eigenvalue $\lambda_{\max}^{\bar A}$. However, small convergence rates are expected for large attitude errors in the direction of $\lambda_{\min}^{\bar A}$ when the spread of the eigenvalues is large. In fact, if one has $\tilde R=\mathcal{R}_a(\pi,v_{\min})$, with $v_{\min}$ being the unit eigenvector corresponding to the eigenvalue $\lambda_{\min}^{\bar A}$, one has $U_A(\tilde R)=4\lambda_{\min}^{\bar A}$ and therefore $k(\tilde R)=[4\lambda_{\max}^{\bar A}-4\lambda_{\min}^{\bar A}]^{-\frac{1}{2}}$ which might be small if the spread of the eigenvalues is large.

In this work, we consider the following choice for the gain function $k_2(\tilde R)=[1+\epsilon-|\tilde R|_I^2]^{-\frac{1}{2}}$ where $\epsilon>0$ is some small enough parameter. The function $k_2(\tilde R)$ satisfies all the aforementioned assumptions (positive, bounded and large for large $\tilde R$). This results in the following \textit{state-dependent-gain} nonlinear complementary filter on $SO(3)$ 
\begin{align}\label{Filter_II}
			\textrm{Filter II}\;\left\{\begin{array}{rcl}
             \dot{\hat R}&=&\hat R\left[\omega_y]_\times-[\sigma_{k_2}\right]_\times\hat R,\\
                 			\sigma_{k_2}&=&-k_2(R_y\hat R^\top)\psi(AR_y\hat R^\top),\\
            \end{array}\right.
\end{align}
where $\hat{R}\in SO(3)$ is an estimate of $R$ with $\hat R(0)=\hat R_0\in SO(3)$ and $A$ is a symmetric matrix such that $\bar A:=\frac{1}{2}(\mathrm{tr}(A)I-A)$ is positive definite.

\begin{theorem}\label{theorem::explicit2}
Consider the attitude kinematics system \eqref{kinematic} coupled with the attitude observer \eqref{Filter_II} under assumption \eqref{assum1}. Then, $\forall\epsilon,\gamma>0$ such that $\gamma<[1+\epsilon]^{-\frac{1}{2}}$, and for any $\tilde R(0)\in\mathcal{B}_{\xi_0}$ such that $\xi_0=1-\gamma^2\epsilon/(1-\gamma^2)$, one has
\begin{equation}\label{eq47}
\underline{\beta}(|\tilde R(0)|_I,t)\leq|\tilde R(t)|_I\leq\bar\beta(|\tilde R(0)|_I,t),
\end{equation}
for all $t\geq 0$, such that $\underline{\beta}$ and $\bar\beta$ are class $\mathcal{K}\mathcal{L}$ functions and are given by
\begin{align*}
\bar{\beta}(s,t)&=\frac{s}{\cosh(\gamma\lambda_{\min}^{\bar A}t)+(1-s^2)^{\frac{1}{2}}\sinh(\gamma\lambda_{\min}^{\bar A}t)},\\
\underline\beta(s,t)&=\frac{s}{\cosh(\lambda_{\max}^{\bar A}t)+(1-s^2)^{\frac{1}{2}}\sinh(\lambda_{\max}^{\bar A}t)}.
\end{align*}
\end{theorem}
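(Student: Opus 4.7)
The plan is to mimic the structure of the proof of Theorem~\ref{theorem::explicit1}, but, because the state-dependent gain $k_2$ prevents an exact integration of the Rodrigues-vector ODE, I would replace that step by a direct scalar comparison argument on $|\tilde R|_I$. The error kinematics $\dot{\tilde R} = \tilde R[\sigma_{k_2}]_\times$ are identical to those of Filter~I up to the extra scalar factor $k_2(\tilde R)$, so the identity chain culminating in \eqref{dtilde_R_ineq} carries over verbatim and yields the two-sided bound
\begin{equation*}
-2k_2\lambda_{\max}^{\bar A}(1-|\tilde R|_I^2)|\tilde R|_I^2 \leq \frac{d}{dt}|\tilde R|_I^2 \leq -2k_2\lambda_{\min}^{\bar A}(1-|\tilde R|_I^2)|\tilde R|_I^2.
\end{equation*}
Since $k_2>0$, the map $t\mapsto|\tilde R(t)|_I^2$ is non-increasing, so the ball $\mathcal{B}_{\xi_0}$ is forward invariant whenever $\tilde R(0)\in\mathcal{B}_{\xi_0}$.

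For the upper bound, I would substitute $k_2=[1+\epsilon-|\tilde R|_I^2]^{-1/2}$ and divide by $2|\tilde R|_I>0$ (the case $\tilde R\equiv I$ being trivial) to obtain $\frac{d}{dt}|\tilde R|_I \leq -\lambda_{\min}^{\bar A}(1-|\tilde R|_I^2)|\tilde R|_I/\sqrt{1+\epsilon-|\tilde R|_I^2}$. The key algebraic observation is that the defining constraint $|\tilde R|_I^2\leq\xi_0$ rearranges exactly to $(1-|\tilde R|_I^2)/\sqrt{1+\epsilon-|\tilde R|_I^2}\geq\gamma\sqrt{1-|\tilde R|_I^2}$, which together with forward invariance of $\mathcal{B}_{\xi_0}$ produces the clean scalar differential inequality $\frac{d}{dt}|\tilde R|_I \leq -\gamma\lambda_{\min}^{\bar A}|\tilde R|_I\sqrt{1-|\tilde R|_I^2}$ for all $t\geq 0$. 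The companion inequality $\frac{d}{dt}|\tilde R|_I \geq -\lambda_{\max}^{\bar A}|\tilde R|_I\sqrt{1-|\tilde R|_I^2}$ follows unconditionally from the trivial bound $1-|\tilde R|_I^2\leq 1+\epsilon-|\tilde R|_I^2$, with no invariance restriction needed. A standard scalar comparison argument then sandwiches $|\tilde R(t)|_I$ between the solutions of $\dot f = -cf\sqrt{1-f^2}$ with $f(0)=|\tilde R(0)|_I$, for the rate constants $c=\lambda_{\max}^{\bar A}$ and $c=\gamma\lambda_{\min}^{\bar A}$ respectively.

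The closed form is recovered by integrating $\dot f = -cf\sqrt{1-f^2}$ via the substitution $f=\mathrm{sech}(u)$, which reduces to $\dot u = c$, hence $u(t)=\mathrm{arcsech}(s)+ct$; expanding $\mathrm{sech}(\mathrm{arcsech}(s)+ct)$ with the $\cosh$ addition formula and $\cosh(\mathrm{arcsech}(s))=1/s$, $\sinh(\mathrm{arcsech}(s))=\sqrt{1-s^2}/s$ produces exactly the expression $s/[\cosh(ct)+\sqrt{1-s^2}\sinh(ct)]$, and the $\mathcal{KL}$ property then follows from elementary monotonicity in $s$ and $t$. The main obstacle in this approach is identifying the threshold $\xi_0$: one has to recognise that $\gamma^2=(1-|\tilde R|_I^2)/(1+\epsilon-|\tilde R|_I^2)$ is the critical ratio at which the factor $k_2(1-|\tilde R|_I^2)$ just dominates $\gamma\sqrt{1-|\tilde R|_I^2}$ on an invariant sublevel set of $|\tilde R|_I$, and this break-even point is exactly what pins down both the region of attraction $\mathcal{B}_{\xi_0}$ and the rate-degradation factor $\gamma$ appearing in $\bar\beta$.
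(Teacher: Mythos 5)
Your proposal is correct and follows essentially the same route as the paper: the same two-sided differential inequality on the error norm with the $k_2$ factor, the same forward-invariance argument identifying $\xi_0$ as the level at which $(1-|\tilde R|_I^2)/(1+\epsilon-|\tilde R|_I^2)\geq\gamma^2$, and the same comparison-lemma integration yielding the hyperbolic closed form. The only cosmetic differences are that the paper works with $|\tilde R|_I^2$ via the antiderivative $-\mathrm{arctanh}(\sqrt{1-x})$ rather than with $|\tilde R|_I$ via the $\mathrm{sech}$ substitution (thereby avoiding the division by $2|\tilde R|_I$), which are equivalent computations.
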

\begin{proof}
Let $\epsilon,\gamma>0$ such that $\gamma\leq[1+\epsilon]^{-\frac{1}{2}}$. Then, one can verify that the scalar $\xi_0=1-\gamma^2\epsilon/(1-\gamma^2)$ is non-negative. Therefore, the ball $\mathcal{B}_{\xi_0}$ is well defined and non-empty. Let $\tilde R(0)\in\mathcal{B}_{\xi_0}$. Following similar steps as in \eqref{dtilde_R_norm}-\eqref{dtilde_R_ineq} and in view of the fact that $\sigma=-\psi(A\tilde R)/(1+\epsilon-|\tilde R|_I^2)^{\frac{1}{2}}$, one obtains
\begin{multline}\label{ineq1::proof_th2}
-2\lambda_{\max}^{\bar A}\frac{1-|\tilde R|_I^2}{(1+\epsilon-|\tilde R|_I^2)^{\frac{1}{2}}}|\tilde R|_I^2\leq\frac{d}{dt}|\tilde R|_I^2\\\leq-2\lambda_{\min}^{\bar A}\frac{1-|\tilde R|_I^2}{(1+\epsilon-|\tilde R|_I^2)^{\frac{1}{2}}}|\tilde R|_I^2.
\end{multline}
Therefore, the attitude error $|\tilde R(t)|_I^2$ is strictly decaying on $\mathcal{B}_{\xi_0}$ which implies that $\mathcal{B}_{\xi_0}$ is forward invariant and, hence, $\tilde R(t)\in\mathcal{B}_{\xi_0}$ for all $t\geq 0$. This implies that one has
\begin{align*}
0\leq|\tilde R(t)|_I^2<1-\gamma^2\epsilon/(1-\gamma^2)<1,\;\forall t\geq 0,
\end{align*}
which, after few algebraic manipulations, leads to
\begin{align*}
\gamma<\frac{(1-|\tilde R(t)|_I^2)^{\frac{1}{2}}}{(1+\epsilon-|\tilde R(t)|_I^2)^{\frac{1}{2}}}<1,\;\forall t\geq 0.
\end{align*}
It follows from \eqref{ineq1::proof_th2} that 
\begin{multline}
-2\lambda_{\max}^{\bar A}(1-|\tilde R|_I^2)^{\frac{1}{2}}|\tilde R|_I^2\leq\frac{d}{dt}|\tilde R|_I^2\\\leq-2\gamma\lambda_{\min}^{\bar A}(1-|\tilde R|_I^2)^{\frac{1}{2}}|\tilde R|_I^2.
\end{multline}
Now making use of the following integral formula
\begin{align*}
\int\frac{dx}{2x(1-x)^{\frac{1}{2}}}&=-\mathrm{arctanh}(\sqrt{1-x}):=f(x),
\end{align*}
and the comparison lemma, one obtains
\begin{equation}
-\lambda_{\max}^{\bar A}t\leq f(|\tilde R(t)|_I^2)-f(|\tilde R(0)|_I^2)\leq -\gamma\lambda_{\min}^{\bar A}t.
\end{equation}
The inverse function $f^{-1}$ is explicitly given by
\begin{align*}
f^{-1}(y)=1-\tanh^2(y)=\frac{1}{\cosh^2(y)}.
\end{align*}
Moreover, using the following identities
\begin{align*}
\cosh(a+b)&=\cosh(a)\cosh(b)+\sinh(a)\sinh(b),\\
\cosh(\mathrm{artanh}(x))&=1/\sqrt{1-x^2},\\
\sinh(\mathrm{artanh}(x))&=x/\sqrt{1-x^2},
\end{align*}
it follows that the attitude error $|\tilde R(t)|^2$ satisfies
\begin{align*}
|\tilde R(t)|_I^2&\leq f^{-1}(-\gamma\lambda_{\min}^{\bar A}t+f(|\tilde R(0)|_I^2))\\
						&\leq \frac{|\tilde R(0)|_I^2}{[\cosh(\gamma\lambda_{\min}^{\bar A}t)+(1-|\tilde R(0)|_I^2)^{\frac{1}{2}}\sinh(\gamma\lambda_{\min}^{\bar A}t)]^2}\\
						&=\big(\bar\beta(|\tilde R(0)|_I,t)\big)^2.
\end{align*}
The proof is complete.
\end{proof}
According to Theorem \ref{theorem::explicit2}, the equilibrium point $|\tilde R|_I=0$ is asymptotically stable inside the ball $\mathcal{B}_{\xi_0}$. Moreover, using the facts that $\cosh(x)=(e^{x}-e^{-x})/2$ and $e^{\frac{x}{2}}/(e^{x}-e^{-x})\leq 3^{\frac{3}{4}}/4$, it follows that
$$
\bar{\beta}(|\tilde R(0)|_I,t)\leq\frac{|\tilde R(0)|_I}{\cosh(\gamma\lambda_{\min}^{\bar A}t)}\leq \frac{3^{\frac{3}{4}}}{4}|\tilde R(0)|_Ie^{-\gamma\lambda_{\min}^{\bar A}t/2}.
$$
Hence, the convergence type of Filter II is indeed exponential inside the ball $\mathcal{B}_{\xi_0}$. Note that when $\epsilon$ is chosen sufficiently small such that $\epsilon\to 0$, one has $\xi_0\to 1$ for any $\gamma<[1+\epsilon]^{-\frac{1}{2}}$. Therefore as the parameter $\epsilon\to 0$, the region of exponential stability extends to the ball $\mathcal{B}_1$ which is equivalent to the space of all rotations less than $\pi$ angle, namely $SO(3)\setminus\Pi$. Note that the scalar $\gamma$ which appears in the exponential decay factor is \textit{independent} on the initial conditions compared to the smooth attitude estimator \eqref{Filter_I}. This results in faster convergence rates for large attitude errors. 

It is worth pointing out that the choice of the innovation term $\sigma_{k2}$ in \eqref{Filter_II} does not correspond, as far as we know, to any gradient of a potential function on $SO(3)$. In fact, this observer was designed by inspection of the dynamics of the attitude error and the desirable performance instead of the traditional systematic gradient-based method where the designer starts from a given potential function, which is typically taken as a Lyapunov candidate, and then designs the observer based on the gradient of this potential function. Our approach proposed above gives a new insight into the design of observers on $SO(3)$, in particular, and for kinematic systems on Lie groups in general.

Nevertheless, if we let $\epsilon\to 0$ and take $A=I$, it can be shown that $\sigma_{k_2}$  in \eqref{Filter_II} is related to the gradient of the non-differentiable potential function
$
\Phi(\tilde R)=1-[1-|\tilde R|_I^2]^{\frac{1}{2}},
$
inspired from the solution to the optimal kinematic problem on $SO(3)$ \cite{Saccon2010} and the work by \cite{lee2012} on attitude tracking. We have introduced an arbitrary weighting matrix $A$ as an additional tuning parameter and a small scalar $\epsilon$ that allows to remove the singularity at $180^{\circ}$ while preserving the advantage of faster convergence rates obtained when using the gradient of the non-differentiable potential function $\Phi$.

Another interesting choice for the state-dependent-gain function $k(\cdot)$ is the more aggressive function $k_3(\tilde R)=[1+\epsilon-|\tilde R|_I^2]^{-1}$ for some small enough $\epsilon>0$. The function $k_3(\tilde R)$ satisfies the needed assumptions (positive, bounded and large for large $\tilde R$). This results in the following version of the nonlinear complementary filter on $SO(3)$ 
\begin{align}\label{Filter_III}
			\textrm{Filter III}\;\left\{\begin{array}{rcl}
             \dot{\hat R}&=&\hat R\left[\omega_y]_\times-[\sigma\right]_\times\hat R,\\
                 			\sigma_{k_3}&=&-k_3(R_y\hat R^\top)\psi(AR_y\hat R^\top),
            \end{array}\right.
\end{align}
where $\hat{R}\in SO(3)$ is an estimate of $R$ with $\hat R(0)=\hat R_0\in SO(3)$ and $A$ is a symmetric matrix such that $\bar A:=\frac{1}{2}(\mathrm{tr}(A)I-A)$ is positive definite. Note that the attitude estimation scheme proposed in \cite{zlotnik2016exponential} can be recovered (in the bias-free case) from the above attitude estimator by taking $A=I$ and setting $\epsilon\to 0$. This innovation term (with $A=I$ and $\epsilon\to 0$) can be obtained from the gradient of the following \textit{barrier}-like potential function
$
\Psi(\tilde R)=-\ln(1-|\tilde R|_I^2).
$
However, the innovation term $\sigma_{k_3}$ in \eqref{Filter_III} is not, as far as we know, a consequence of a gradient of any potential function but a novel design choice that has been introduced to obtain the desirable performance demonstrated in the following theorem.
\begin{theorem}\label{theorem::explicit3}
Consider the attitude kinematics system \eqref{kinematic} coupled with the attitude observer \eqref{Filter_III} under assumption \eqref{assum1}. Then, $\forall\epsilon,\gamma>0$ such that $\gamma<[1+\epsilon]^{-1}$, and for any $\tilde R(0)\in\mathcal{B}_{\xi_0}$ such that $\xi_0=1-\gamma\epsilon/(1-\gamma)$, one has
\begin{equation}
\underline{\beta}(|\tilde R(0)|_I,t)\leq|\tilde R(t)|_I\leq\bar\beta(|\tilde R(0)|_I,t),
\end{equation}
for all $t\geq 0$, such that
\begin{align*}
\bar{\beta}(|\tilde R(0)|_I,t)&=|\tilde R(0)|_Ie^{-\gamma\lambda_{\min}^{\bar A}t},\\
\underline\beta(|\tilde R(0)|_I,t)&=|\tilde R(0)|_Ie^{-\lambda_{\max}^{\bar A}t}.
\end{align*}
\end{theorem}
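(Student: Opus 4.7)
The plan is to mimic the approach used in the proof of Theorem \ref{theorem::explicit2}, but exploit the stronger gain $k_3(\tilde R)=[1+\epsilon-|\tilde R|_I^2]^{-1}$ (with a squared, rather than square-rooted, denominator) to obtain a cleaner purely-exponential bound rather than the hyperbolic-function bound of Filter II. First I would verify that $\xi_0=1-\gamma\epsilon/(1-\gamma)\in[0,1)$ whenever $\gamma<[1+\epsilon]^{-1}$, so that the ball $\mathcal{B}_{\xi_0}$ is well-defined and non-empty. Then, reproducing the computation \eqref{dtilde_R_norm}--\eqref{dtilde_R_ineq} with the innovation term $\sigma_{k_3}$ in place of $\sigma$, I would obtain the differential inequality
\begin{equation*}
-2\lambda_{\max}^{\bar A}\,\frac{1-|\tilde R|_I^2}{1+\epsilon-|\tilde R|_I^2}\,|\tilde R|_I^2 \leq \frac{d}{dt}|\tilde R|_I^2 \leq -2\lambda_{\min}^{\bar A}\,\frac{1-|\tilde R|_I^2}{1+\epsilon-|\tilde R|_I^2}\,|\tilde R|_I^2.
\end{equation*}

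Since the right-hand side is strictly negative whenever $|\tilde R|_I^2<1$, the scalar $|\tilde R(t)|_I^2$ is strictly decaying on $\mathcal{B}_{\xi_0}$, which establishes forward invariance of this ball. The key algebraic step, which I expect to be the routine but most delicate point, is to verify that on $\mathcal{B}_{\xi_0}$ the ratio $(1-|\tilde R|_I^2)/(1+\epsilon-|\tilde R|_I^2)$ is sandwiched between $\gamma$ and $1$. The upper bound is immediate from $\epsilon>0$, and the lower bound is equivalent (after cross-multiplication) to $|\tilde R|_I^2\leq 1-\gamma\epsilon/(1-\gamma)=\xi_0$, which is exactly the defining condition of the invariant ball. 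This is precisely why $\xi_0$ is chosen in this form.

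With these two bounds in hand, the differential inequality collapses to the linear comparison
\begin{equation*}
-2\lambda_{\max}^{\bar A}\,|\tilde R|_I^2 \leq \frac{d}{dt}|\tilde R|_I^2 \leq -2\gamma\lambda_{\min}^{\bar A}\,|\tilde R|_I^2.
\end{equation*}
Applying the standard comparison lemma and integrating directly (no need for the $\mathrm{arctanh}$ integral identity invoked in Theorem \ref{theorem::explicit2}) yields $|\tilde R(0)|_I^2 e^{-2\lambda_{\max}^{\bar A}t} \leq |\tilde R(t)|_I^2 \leq |\tilde R(0)|_I^2 e^{-2\gamma\lambda_{\min}^{\bar A}t}$, from which the stated $\bar\beta$ and $\underline\beta$ are obtained by taking square roots. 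The main obstacle, as noted, is not computational but structural: one must select the definition of $\xi_0$ so that the invariance argument and the lower bound on the ratio are mutually consistent, and this is precisely what the formula $\xi_0=1-\gamma\epsilon/(1-\gamma)$ accomplishes.
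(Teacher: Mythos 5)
Your proposal is correct and follows essentially the same route as the paper's proof: establish forward invariance of $\mathcal{B}_{\xi_0}$ from the sign of $\frac{d}{dt}|\tilde R|_I^2$, show the ratio $(1-|\tilde R|_I^2)/(1+\epsilon-|\tilde R|_I^2)$ is sandwiched between $\gamma$ and $1$ on that ball (which is exactly how $\xi_0$ is chosen), and then apply the comparison lemma to the resulting linear differential inequality. Your observation that no $\mathrm{arctanh}$ integral is needed here, unlike for Filter II, matches the paper's treatment.
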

\begin{proof}
Let $\epsilon,\gamma>0$ such that $\gamma\leq[1+\epsilon]^{-1}$. Then, one can verify that the scalar $\xi_0=1-\gamma\epsilon/(1-\gamma)$ is non-negative. Therefore, the ball $\mathcal{B}_{\xi_0}$ is well defined and non-empty. Let $\tilde R(0)\in\mathcal{B}_{\xi_0}$. Following similar steps as in \eqref{dtilde_R_norm}-\eqref{dtilde_R_ineq} and in view of the fact that $\sigma_{k_3}=-\psi(A\tilde R)/(1+\epsilon-|\tilde R|_I^2)$, one obtains
\begin{multline}\label{ineq1::proof_th2}
-2\lambda_{\max}^{\bar A}\frac{1-|\tilde R|_I^2}{1+\epsilon-|\tilde R|_I^2}|\tilde R|_I^2\leq\frac{d}{dt}|\tilde R|_I^2\\\leq-2\lambda_{\min}^{\bar A}\frac{1-|\tilde R|_I^2}{1+\epsilon-|\tilde R|_I^2}|\tilde R|_I^2.
\end{multline}
Therefore, the attitude error $|\tilde R(t)|_I^2$ is strictly decaying on $\mathcal{B}_{\xi_0}$ which implies that $\mathcal{B}_{\xi_0}$ is forward invariant and, hence, $\tilde R(t)\in\mathcal{B}_{\xi_0}$ for all $t\geq 0$. This implies that one has
\begin{align*}
0\leq|\tilde R(t)|_I^2<1-\gamma\epsilon/(1-\gamma)<1,\;\forall t\geq 0,
\end{align*}
which, after few algebraic manipulations, leads to
\begin{align*}
\gamma<\frac{1-|\tilde R(t)|_I^2}{1+\epsilon-|\tilde R(t)|_I^2}<1,\;\forall t\geq 0.
\end{align*}
It follows from \eqref{ineq1::proof_th2} that 
\begin{align*}
-2\lambda_{\max}^{\bar A}|\tilde R|_I^2\leq\frac{d}{dt}|\tilde R|_I^2\leq-2\gamma\lambda_{\min}^{\bar A}|\tilde R|_I^2.
\end{align*}
which yields the result of the theorem using the comparison lemma.
\end{proof}
It should be mentioned that the three discussed filters above (Filter I, Filter II and Filter III) all share the same  performance properties for small attitude errors (local performance). This is due to the fact that, for $\epsilon$ sufficiently small, one has the term $[1+\epsilon-|\tilde R|_I^2]\to 1$ for small values of $|\tilde R|_I^2$. Consequently the innovation terms for the three filters become identical and hence the performance (convergence, filtering...etc). The difference between the three filters is remarkable as the attitude error increases. To illustrate this, we plot the variations of the norm of the innovation terms $\sigma_{k_1}, \sigma_{k_2}$ and $\sigma_{k_3}$ for all the three different filters. Let $\tilde R=\mathcal{R}_a(\theta,e_3)$ where $\theta\in[0,2\pi]$ and $e_3=[0,0,1]^\top$. Consider a weighting matrix $A=\mathrm{diag}([1,2,3])$ and let us choose different values for $\epsilon=0.1, 0.01$ and $0.001$.
\begin{figure}[t!]
    \centering
    \begin{subfigure}[t]{0.5\textwidth}
        \centering
        \includegraphics[scale=0.3]{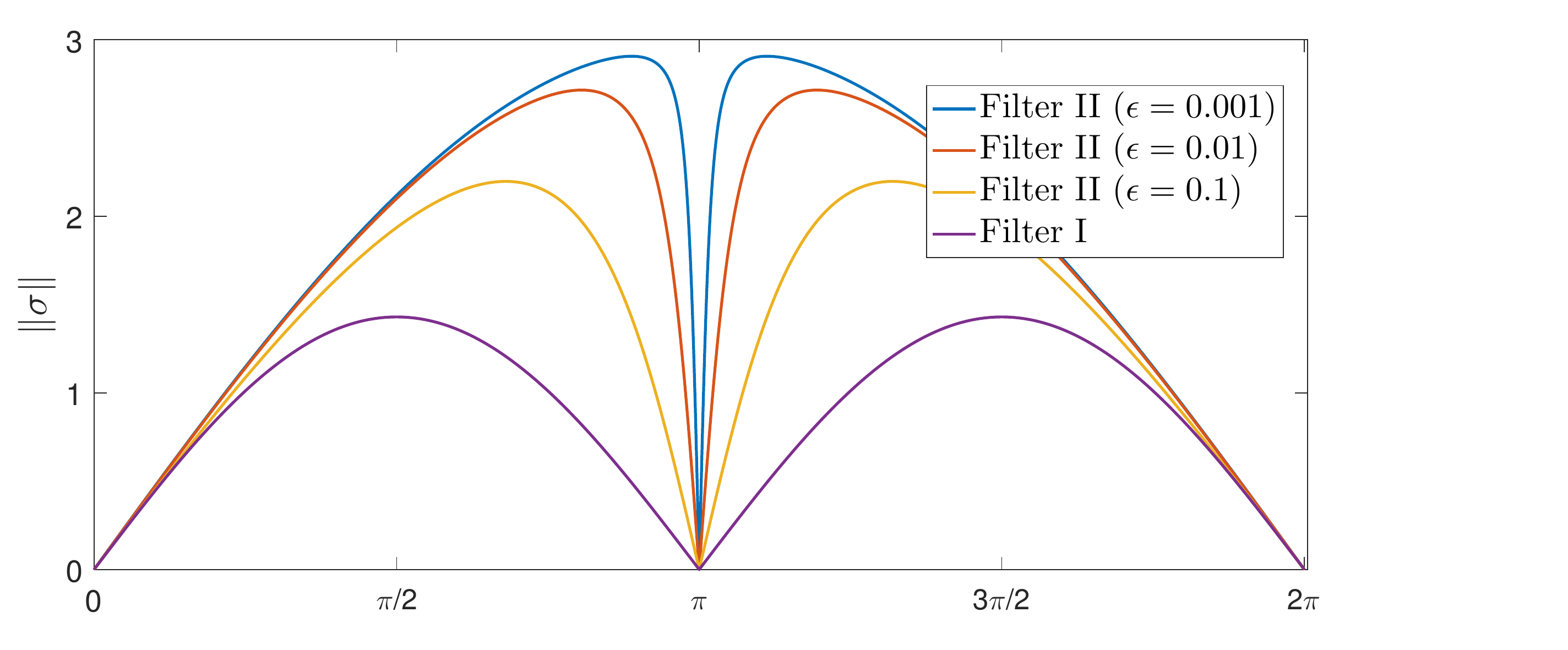}
        \caption{}
        \label{figure::sigma2}
    \end{subfigure}%
    \\
    \begin{subfigure}[t]{0.5\textwidth}
        \centering
        \includegraphics[scale=0.3]{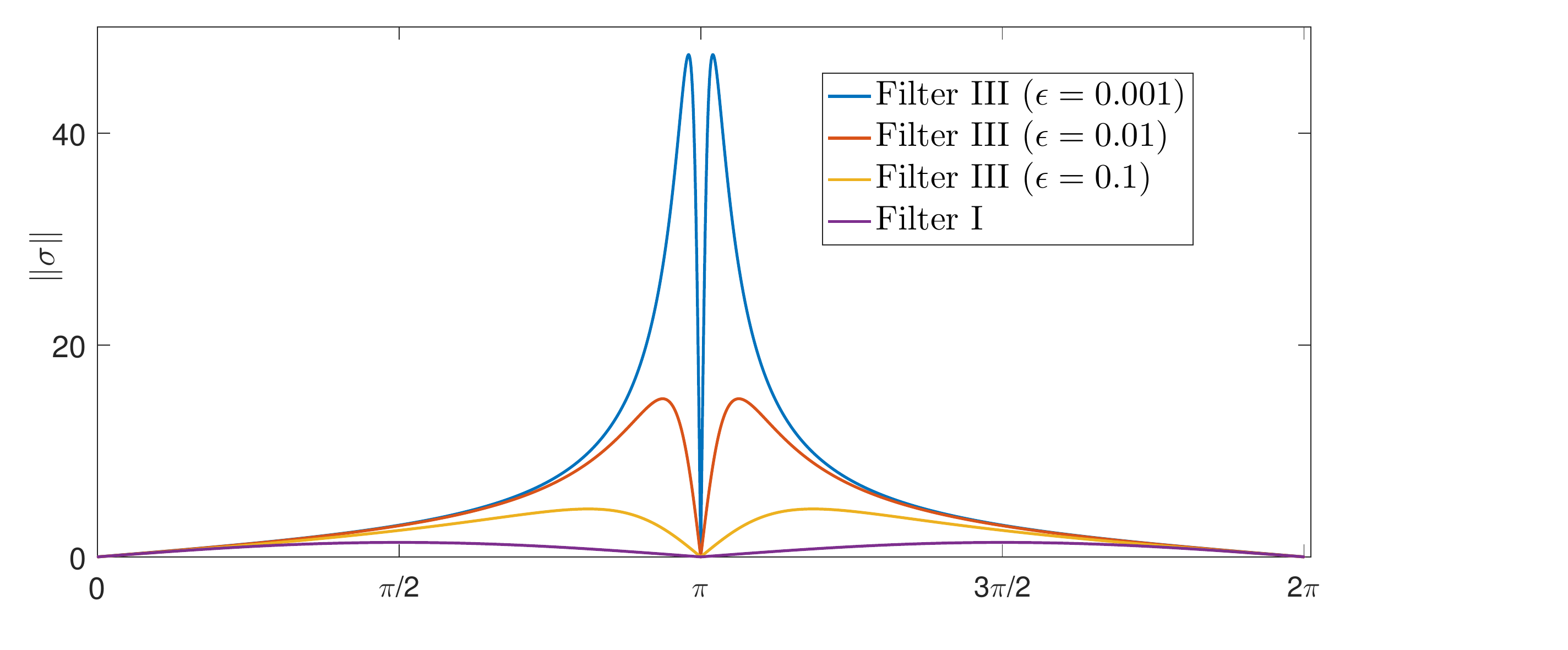}
        \caption{}
        \label{figure::sigma3}
    \end{subfigure}
    \caption{Variations of the norm of the innovation term $\sigma$ with respect to the attitude error angle $\theta$: (a) Filter II (b) Filter III.}
\end{figure}
%
It can be seen from Fig. \ref{figure::sigma2} and Fig. \ref{figure::sigma3} that the innovation term $\sigma$ for all the three different filters is bounded (for a fixed $\epsilon$) for all attitude angles. Both Filter II and Filter III use a larger (in terms of norm) correction term when the attitude error is large compared to the traditional estimation scheme given by Filter I. As $\epsilon$ is chosen smaller, the correction term becomes larger for large attitude errors. Moreover, Filter III employs a more aggressive correction term compare to Filter II. This explains the larger picks in Fig. \ref{figure::sigma3}. On the other hand, for all filters the $\sigma$ term vanishes at attitudes of angle $180^{\circ}$ around the eigen-axis $e_3$ which represents one of the undesired equilibria for the filters characterized by $\mathcal{R}_a(\pi,\mathcal{E}(A))=\cup_{i=1,2,3}\mathcal{R}_a(\pi,e_i)$.
\section{Robustness Analysis for Different Nonlinear Complementary Filters on $SO(3)$ in the Presence of Measurement Errors}
In this section we aim to study the robustness of the nonlinear complimentary filters proposed in the previous section to the following measurement errors:
\begin{itemize}
\item Bounded errors in the angular velocity measurements such as noise, bias, disturbances...etc.
\item Small errors in the attitude measurements.
\end{itemize}
As far as we know, robustness on the compact manifold $SO(3)$ has not been formulated before, at least in the context we study here. In fact, it is not clear how to define the meaning of \textit{divergence} and \textit{instability} which are necessary to justify the notion of ``robustness". In dynamical systems theory, a system is said to be unstable if at least one state variable in the system evolves without bounds (unbounded). The meaning of \textit{unbounded} state is obviously related to the chosen \textit{metric} (distance) on the given configuration space. For Euclidean spaces, for example, a state that evolves unbounded means that its Euclidean norm grows to infinity ($\infty$); which represents the maximum distance that the metric allows.

However,  the states of $SO(3)$ (rotation matrices) are naturally bounded with respect to any chosen smooth metric thanks to the geometry of the manifold. For our present purpose it is justified to relax the notion of instability on $SO(3)$ as follows: 
\begin{definition}
Given a Riemanian metric on $SO(3)$, a dynamical system on $SO(3)$ is said to be unstable if the state variable of the system (namely the attitude matrix $\tilde R\in SO(3)$) evolves to the manifold of maximum distance.
\end{definition}
According to Definition 3, let us choose the canonical Riemnian metric on $SO(3)$, called also the Euclidean metric. This metric results in the Euclidean distance defined in \eqref{norm_R}. For a given rotation matrix $\tilde R\in SO(3)$, the maximum distance $|\tilde R|_I=1$ is obtained when $\tilde R\in \Pi$; manifold of all rotations of angle $180^{\mathrm{o}}$. Interestingly, the Rodrigues vector $\mathcal{Z}(\tilde R)\in\mathbb{R}^3$ grows unbounded when $\tilde R\in \Pi$. This motivates to study the robustness of the dynamics of the Rodrigues vector $\mathcal{Z}(\tilde R)$ with respect to measurement errors in the traditional sense of robustness on Euclidean spaces. More specifically, existing results on Input-to-State-Stability (ISS) on Euclidean spaces can be directly applied to the dynamics of the Rodrigues vector $\mathcal{Z}(\tilde R)$ to derive conclusions about the robustness of the proposed filters' error dynamics.
\subsection{Robustness Study to Gyro Measurement Errors}
Here we assume that the gyro measurements are given according \eqref{omega::measured} for some bounded error vector $n_\omega$. We also consider perfect attitude information such that $R_y\equiv R$. Following similar steps as in \eqref{dtilde_R} and \eqref{dtilde_Z}, it can be verified that the dynamics of the Rodrigues vector (for the three different discussed filters) in the presence of angular velocity measurement errors are given by the following differential equations:
{\small
\begin{align}
\label{dZ1}
\textrm{Filter I: }\dot{\mathcal{Z}}(\tilde R)=-k_1(\mathcal{Z}(\tilde R))\bar A\mathcal{Z}(\tilde R)-g(\mathcal{Z}(\tilde R))\hat Rn_\omega,\\
\label{dZ2}
\textrm{Filter II: }\dot{\mathcal{Z}}(\tilde R)=-k_2(\mathcal{Z}(\tilde R))\bar A\mathcal{Z}(\tilde R)-g(\mathcal{Z}(\tilde R))\hat Rn_\omega,\\
\label{dZ3}
\textrm{Filter III: }\dot{\mathcal{Z}}(\tilde R)=-k_3(\mathcal{Z}(\tilde R))\bar A\mathcal{Z}(\tilde R)-g(\mathcal{Z}(\tilde R))\hat Rn_\omega,
\end{align}
}where the scalar valued functions $k_1,k_2$ and $k_3$ are given by the following expressions 
{\small\begin{align*}
k_1(\mathcal{Z}(\tilde R))&=1,\\
k_2(\mathcal{Z}(\tilde R))&=\left(\frac{1+\|\mathcal{Z}(\tilde R)\|^2}{1+\epsilon(1+\|\mathcal{Z}(\tilde R)\|^2)}\right)^{\frac{1}{2}},\\
k_3(\mathcal{Z}(\tilde R))&=\frac{1+\|\mathcal{Z}(\tilde R)\|^2}{1+\epsilon(1+\|\mathcal{Z}(\tilde R)\|^2)},
\end{align*}}
and $g(\mathcal{Z}(\tilde R))=\frac{1}{2}\big(I+[\mathcal{Z}(\tilde R)]_\times+\mathcal{Z}(\tilde R)\mathcal{Z}(\tilde R)^\top\big)$. Our goal in this subsection is to study the ISS property of the above dynamical systems with respect to bounded gyro disturbances $n_\omega$. Before doing so important  remarks are in order.

It can be noticed from \eqref{dZ1}-\eqref{dZ3} that, for small attitude estimation errors, the transfer function from the disturbance signal $n_\omega$ to the attitude vector $\mathcal{Z}(\tilde R)$ satisfies
$$
H(s)=\frac{1}{2}(sI+\bar A)^{-1},
$$
Note that all the proposed filters have the same transfer function $H(s)$ as above near the desired equilibrium point. The cutoff frequencies of $H(s)$ are given by
$$
2\pi f_{c_i}=\lambda_i^{\bar{A}},
$$
where $\lambda_i^{\bar{A}}$ denotes the $i-$th eigenvalue of $\bar A$. Therefore, as the magnitude of the eigenvalues of $\bar A$ decreases the cutoff frequency of the filters decreases and high frequency components of the disturbance signal are filtered. On the other hand, the eigenvalues of $\bar A$ directly affect the speed of convergence of the filters as well. Larger values of $\lambda_i^{\bar A}$ means faster convergence rates. This shows the trade-off that one would expect when considering the traditional nonlinear complimentary filter \eqref{Filter_I} (constant gain). The attitude filters proposed in \eqref{Filter_II} and \eqref{Filter_III}, however, solve this conflict between the speed of convergence and the cutoff frequency by considering a state-dependent time-varying gain which increases  as the attitude estimation error grows and decreases to $1$ for small attitude estimation errors. Therefore, one may pick up small values of $\lambda_i^{\bar A}$ to guarantee better filtering while still maintaining a good convergence speed when using the attitude filters proposed in this paper.

Now let us study rigorously the ISS property of the proposed attitude filtering schemes. First, let us prove the following interesting ``existence" result that motivates the need to carefully investigate the robustness of the nonlinear complementary filter on $SO(3)$ discussed in the previous section.
\begin{prop}\label{proposition::robustness1}
Consider the dynamics \eqref{dZ1} obtained from the error dynamics of Filter I. Assume that there exists $i\in\{1,2,3\}$ such that
\begin{align*}
\hat R(t)n_\omega(t)&=-2\lambda_{i}^{\bar A}\mathcal{Z}(\tilde R(0))(2\lambda_{i}^{\bar A}t+1)^{-\frac{1}{2}},\\
\bar A\mathcal{Z}(\tilde R(0))&=\lambda_i^{\bar A}\mathcal{Z}(\tilde R(0)),\quad \|\mathcal{Z}(\tilde R(0))\|=1.
\end{align*}
Then  $\mathcal{Z}(\tilde R(t))=\mathcal{Z}(\tilde R(0))(2\lambda_{i}^{\bar A}t+1)^{\frac{1}{2}}$ for all $t\geq 0$.
\end{prop}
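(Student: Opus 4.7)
The strategy is simply to verify by direct substitution that the proposed $\mathcal{Z}(\tilde R(t))$ satisfies the initial value problem (dZ1), and then to invoke uniqueness of solutions to ODEs with locally Lipschitz right-hand side. Let me write $\lambda := \lambda_i^{\bar A}$, $z_0:=\mathcal{Z}(\tilde R(0))$, and $\alpha(t) := (2\lambda t+1)^{1/2}$, so that the candidate is $z(t) := \alpha(t)\,z_0$. Observe that $\dot\alpha = \lambda/\alpha$ and $\alpha(0)=1$, so the candidate satisfies the required initial condition $z(0)=z_0$ and has time derivative $\dot z(t)=(\lambda/\alpha(t))\,z_0$.

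Next I would evaluate each term on the right-hand side of (dZ1). The eigenvector assumption gives immediately $\bar A z(t)=\lambda\alpha(t)\,z_0$. For the disturbance term, substitute $z=\alpha z_0$ into $g(z)=\tfrac{1}{2}(I+[z]_\times+zz^\top)$ and use the hypothesis $\hat R n_\omega = -2\lambda z_0/\alpha$, together with the two algebraic identities $[z_0]_\times z_0 = z_0\times z_0 = 0$ and $z_0^\top z_0=1$, to collapse the product to
\begin{equation*}
g(z)\hat R n_\omega \;=\; -\frac{\lambda}{\alpha}\bigl(z_0 + \alpha^2 z_0\bigr) \;=\; -\frac{\lambda(1+\alpha^2)}{\alpha}\,z_0.
\end{equation*}
Plugging these into $-\bar A z - g(z)\hat R n_\omega$ gives $-\lambda\alpha z_0 + \lambda(1+\alpha^2)z_0/\alpha = (\lambda/\alpha)z_0$, which exactly equals $\dot z(t)$; hence the candidate is indeed a solution.

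Finally, since $g$, $\bar A$ and the forcing $\hat R(t)n_\omega(t)$ are smooth in $z$ and continuous in $t$ on any bounded interval, the solution of (dZ1) with initial condition $z_0$ is unique wherever it exists. Because the candidate $\alpha(t)z_0$ is defined and $C^1$ on $[0,\infty)$, uniqueness identifies $\mathcal{Z}(\tilde R(t))$ with it for all $t\geq 0$, yielding the claim. There is no real obstacle here: the proposition is essentially a worked example exhibiting a disturbance that drives $\|\mathcal{Z}(\tilde R)\|\to\infty$, so the only subtlety is choosing the right ansatz (a scalar multiple of the eigenvector $z_0$) so that the skew-symmetric piece $[z]_\times z$ in $g(z)$ annihilates and the remaining scalar ODE $\dot\alpha = \lambda/\alpha$ pops out cleanly.
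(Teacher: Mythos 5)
Your verification is correct: with $z=\alpha z_0$, the identities $[z_0]_\times z_0=0$ and $z_0^\top z_0=1$ do collapse $g(z)\hat Rn_\omega$ to $-\lambda(1+\alpha^2)z_0/\alpha$, and the right-hand side of \eqref{dZ1} then equals $(\lambda/\alpha)z_0=\dot z$, so the candidate solves the initial value problem; uniqueness (the right-hand side is polynomial in $z$ and continuous in $t$) finishes the argument. The paper reaches the same conclusion by a different decomposition: it first derives the dynamics of the unit direction $\zeta=\mathcal{Z}(\tilde R)/\|\mathcal{Z}(\tilde R)\|$, argues that $\zeta$ is stationary because the assumptions make $\dot\zeta$ vanish along the constant-direction solution, and only then substitutes the scalar ansatz $\|\mathcal{Z}(\tilde R(t))\|=(2\lambda_i^{\bar A}t+1)^{1/2}$ into the resulting norm equation. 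Your one-shot vector substitution is leaner and, arguably, tighter: it bypasses the direction-invariance step, which in the paper is stated somewhat informally (checking $\dot\zeta(0)=0$ and asserting invariance, rather than verifying that the constant direction is a genuine solution of the non-autonomous $\zeta$-dynamics for all $t$), and it makes the appeal to uniqueness explicit where the paper leaves it implicit in the phrase ``by direct substitution.'' The paper's route, on the other hand, exposes the geometric mechanism — the disturbance is aligned with the eigenvector so only the radial dynamics are excited — which is the intuition behind the ansatz you had to guess.
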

\begin{proof}
First we show that, under the conditions of Proposition \ref{proposition::robustness1}, the direction of the Rodrigues vector $\mathcal{Z}(\tilde R(t))$ remains constant for all times.
In view of \eqref{dZ1}, one has
\begin{multline}\label{dtilde_Z3}
\frac{d}{dt}\|\mathcal{Z}(\tilde R)\|^2=-2\mathcal{Z}(\tilde R)^\top\bar A\mathcal{Z}(\tilde R)-\\(1+\|\mathcal{Z}(\tilde R)\|^2)\mathcal{Z}(\tilde R)^\top\hat Rn_\omega.
\end{multline}
Therefore, using \eqref{dZ1} and \eqref{dtilde_Z3}, it can be shown that $\zeta:=\frac{\mathcal{Z}(\tilde R)}{\|\mathcal{Z}(\tilde R)\|}$ satisfies the following differential equation
\begin{equation}\label{dtilde_Z4}
\frac{d}{dt}\zeta=[\zeta]_\times^2\bar A\zeta-\frac{1}{2}\Big([\zeta]_\times-\frac{[\zeta]_\times^2}{\|\mathcal{Z}(\tilde R)\|}\Big)\hat Rn_\omega.
\end{equation}
Hence, it is straightforward to see that, under the assumption of the proposition, one has
$$
\frac{d}{dt}\zeta(0)=0,
$$
which implies that the initial direction is an equilibrium point of the non-autonomous dynamics \eqref{dtilde_Z4} and, hence, the direction of the Rodrigues vector $\mathcal{Z}(\tilde R(t))$ is constant for all $t\geq 0$. Therefore, the angle between the two signals $\mathcal{Z}(\tilde R(t))$ and $\hat R(t)n_\omega(t)$ remains zero for all times. It follows, from equation \eqref{dtilde_Z3} that
\begin{multline*}
\frac{d}{dt}\|\mathcal{Z}(\tilde R)\|^2=-2\lambda_i^{\bar A}\|\mathcal{Z}(\tilde R)\|^2+\\2\lambda_{i}^{\bar A}(1+\|\mathcal{Z}(\tilde R)\|^2)\|\mathcal{Z}(\tilde R)\|(2\lambda_{i}^{\bar A}t+1)^{-\frac{1}{2}}.
\end{multline*}
It can be checked, by direct substitution in the above equation, that $\|\mathcal{Z}(\tilde R(t))\|=(2\lambda_i^{\bar A}t+1)^{\frac{1}{2}}$ is a solution. This proves the proposition.
\end{proof}
In Proposition \ref{proposition::robustness1}, it is shown that the traditional gradient-based nonlinear attitude observer on $SO(3)$ is not ISS with respect to bounded angular velocity measurements disturbances. In fact, one can construct a bounded and vanishing disturbance that prevents the observer from converging to the actual attitude. If we consider a particular time-dependent vanishing measurement disturbance $\hat Rn_\omega(t)$ with an initial attitude error $|\tilde R(0)|=1/\sqrt{2}$, or equivalently $\|\mathcal{Z}(\tilde R(0))\|=1$, which corresponds to an angle of rotation equals $90^{\mathrm{o}}$, the attitude error tends to the undesired manifold, \textit{i.e.,} $|\tilde R|_I \to 1$ ($\|\mathcal{Z}(\tilde R)\|\to\infty$) where the error angle is maximum and equals $180^{\mathrm{o}}$. It should be mentioned that the result of Proposition \eqref{proposition::robustness1} does not intend to question the applicability of the nonlinear complimentary filter of \cite{Mahony2008}. Nevertheless, the discussions of this section have motivated us to think more rigorously about the issue of robustness and convergence speed when designing attitude observers on $SO(3)$.
\begin{theorem}\label{theorem::ISS}
System \eqref{dZ1} (resp. \eqref{dZ2} and \eqref{dZ3}) is locally input-to-state stable. In particular, for all $r>0$ and $0<\varrho<1$, there exists $\beta_1$ (resp. $\beta_2$ and $\beta_3$) $\in\mathcal{K}\mathcal{L}$ such that for all $\|\mathcal{Z}(\tilde R(0))\|<r$ and $\sup_{t\geq 0}\|n_\omega(t)\|<k_{u1}$ (resp. $k_{u2}$ and $k_{u3}$), one has
\begin{align*}
\|\mathcal{Z}(\tilde R(t))\|\leq\beta_i(\|\mathcal{Z}(\tilde R(0))\|,t)+\gamma_i(\sup_{t\geq 0}\|n_\omega(t)\|),
\end{align*}
for $i=1,2,3$ with $k_{ui}=\gamma_i^{-1}(r)$, $\gamma_i(s)=\varsigma_i(r)s/2\varrho\lambda_{\min}^{\bar A}$ and 
\begin{align*}
\varsigma_1(r)&=(1+r^2),\\
\varsigma_2(r)&=((1+r^2)(1+\epsilon+\epsilon r^2))^{\frac{1}{2}},\\
\varsigma_3(r)&=1+\epsilon+\epsilon r^2.
\end{align*}
\end{theorem}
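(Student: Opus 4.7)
The plan is to invoke the LISS Lyapunov characterization stated in Lemma \ref{lemma::ISS1}, using the quadratic candidate $V(\mathcal{Z}) = \tfrac{1}{2}\|\mathcal{Z}(\tilde R)\|^2$, so that $\alpha_1(s)=\alpha_2(s)=s^2/2$ in the notation of Lemma \ref{lemma::ISS1} and hence $\alpha_1^{-1}\circ\alpha_2$ is the identity (explaining why the final $\gamma_i$ equals the ``forcing'' radius $\rho_i$). First I would differentiate $V$ along each of \eqref{dZ1}--\eqref{dZ3}. The crucial simplification is that $\mathcal{Z}^\top [\mathcal{Z}]_\times = 0$, so the symmetric part of $g(\mathcal{Z})$ that survives in the cross term is just $\tfrac{1}{2}(I + \mathcal{Z}\mathcal{Z}^\top)$, giving
\begin{equation*}
\dot V \;=\; -k_i(\mathcal{Z})\,\mathcal{Z}^\top \bar A\,\mathcal{Z} \;-\; \tfrac{1}{2}(1+\|\mathcal{Z}\|^2)\,\mathcal{Z}^\top \hat R\,n_\omega,
\end{equation*}
and orthogonality of $\hat R$ yields $|\mathcal{Z}^\top \hat R n_\omega|\le \|\mathcal{Z}\|\,\|n_\omega\|$. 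Bounding the quadratic form by $\lambda_{\min}^{\bar A}\|\mathcal{Z}\|^2$ below produces the basic dissipation inequality I will use for each filter.

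For Filter I, where $k_1\equiv 1$, restricting $\|\mathcal{Z}\|<r$ gives $1+\|\mathcal{Z}\|^2\le \varsigma_1(r)$, and the standard ISS trick of splitting $\lambda_{\min}^{\bar A}\|\mathcal{Z}\|^2 = (1-\varrho)\lambda_{\min}^{\bar A}\|\mathcal{Z}\|^2 + \varrho\lambda_{\min}^{\bar A}\|\mathcal{Z}\|^2$ shows that whenever $\|\mathcal{Z}\|\ge \rho_1(\|n_\omega\|):=\varsigma_1(r)\|n_\omega\|/(2\varrho\lambda_{\min}^{\bar A})$ the absorbing term dominates and $\dot V \le -(1-\varrho)\lambda_{\min}^{\bar A}\|\mathcal{Z}\|^2$. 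This is the desired $\alpha_3$ class-$\mathcal{K}$ decay.

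For Filters II and III the same scheme applies, but one must handle the state-dependent coefficient $k_i(\mathcal{Z})$ carefully. The key observation is the neat algebraic cancellation
\begin{equation*}
\frac{1+\|\mathcal{Z}\|^2}{k_2(\mathcal{Z})} = \bigl((1+\|\mathcal{Z}\|^2)(1+\epsilon(1+\|\mathcal{Z}\|^2))\bigr)^{1/2}, \qquad \frac{1+\|\mathcal{Z}\|^2}{k_3(\mathcal{Z})} = 1+\epsilon+\epsilon\|\mathcal{Z}\|^2,
\end{equation*}
both of which are monotone in $\|\mathcal{Z}\|$ and therefore bounded by $\varsigma_2(r)$ and $\varsigma_3(r)$, respectively, on the ball $\|\mathcal{Z}\|<r$. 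Factoring $k_i(\mathcal{Z})$ out of $\dot V$ and applying the same splitting trick leads to $\rho_i(\|n_\omega\|) = \varsigma_i(r)\|n_\omega\|/(2\varrho\lambda_{\min}^{\bar A})$, while the surviving dissipation is $-k_i(\mathcal{Z})(1-\varrho)\lambda_{\min}^{\bar A}\|\mathcal{Z}\|^2$; since $k_i$ is bounded below by a positive constant depending only on $\epsilon$ (namely $k_2\ge (1+\epsilon)^{-1/2}$ and $k_3\ge (1+\epsilon)^{-1}$), this remains a class-$\mathcal{K}$ function of $\|\mathcal{Z}\|$.

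The hardest step is really locating the right algebraic identity so that the ``forcing'' radius $\rho_i$ acquires exactly the quoted $\varsigma_i(r)$ form; once this is seen, the rest is a mechanical application of Lemma \ref{lemma::ISS1} with $r_x=r$. Finally, since $\alpha_1^{-1}\circ\alpha_2=\mathrm{id}$, Lemma \ref{lemma::ISS1} yields $\gamma_i=\rho_i$ and $k_{ui}=\rho_i^{-1}(r)=\gamma_i^{-1}(r)$, as claimed in the statement of the theorem. The $\mathcal{KL}$ functions $\beta_i$ are then produced by the standard comparison argument on $\dot V\le -\alpha_3(\|\mathcal{Z}\|)$ inside the region where the state dominates the input.
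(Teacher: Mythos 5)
Your proposal is correct and follows essentially the same route as the paper: the same quadratic ISS-Lyapunov function $V=\tfrac{1}{2}\|\mathcal{Z}(\tilde R)\|^2$, the same derivative computation using $\mathcal{Z}^\top[\mathcal{Z}]_\times=0$ and $\|\hat R n_\omega\|=\|n_\omega\|$, the same $\varrho$-splitting to obtain $\rho_i(s)=\varsigma_i(r)s/2\varrho\lambda_{\min}^{\bar A}$ (the paper writes this as $(1+r^2)s/2k_i(r)\varrho\lambda_{\min}^{\bar A}$, which is the same quantity), and the same final appeal to Lemma \ref{lemma::ISS1}. Your explicit remark that $(1+\|\mathcal{Z}\|^2)/k_i(\mathcal{Z})$ is monotone and hence bounded by $\varsigma_i(r)$ on the ball is the justification the paper leaves implicit.
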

Before addressing the proof of Theorem \ref{theorem::ISS} some remarks are in order. Theorem \ref{theorem::ISS} shows that all the attitude filters (Filter I, Filter II and Filter III) discussed in this paper are Locally Input-to-State Stable (LISS) in the sense of Definition \ref{definition::ISS1}. It should be mentioned that the conclusion of LISS can be directly inferred from the fact that the unforced ($n_\omega\equiv 0$) systems \eqref{dZ1}-\eqref{dZ3} are globally asymptotically stable by using the result of \cite[Lemma I.1]{Sontag1996}. However, the conclusions of Theorem \ref{theorem::ISS} give ``explicitly" the bounds $k_{ui}, i=1,2,3$ on the disturbance $n_\omega$ where LISS holds for all the proposed filters.  The result of Theorem \ref{theorem::ISS} allows us to compare rigorously the robustness of these filters to gyro measurement errors.

For a given constant $r>0$ in Theorem \ref{theorem::ISS}, the explicit bounds on $\sup_{t\geq 0}\|n_\omega(t)\|$ can be derived as
\begin{align*}
k_{u1}&=\frac{\varrho\lambda_{\min}^{\bar A}r}{1+r^2},\\
k_{u2}&=\frac{\varrho\lambda_{\min}^{\bar A}r}{((1+r^2)(1+\epsilon+\epsilon r^2))^{\frac{1}{2}}},\\
k_{u3}&=\frac{\varrho\lambda_{\min}^{\bar A}r}{1+\epsilon+\epsilon r^2}.
\end{align*}
The constant $r>0$, in Theorem \ref{theorem::ISS}, can be arbitrarily large to cover all initial conditions for the attitude error $\tilde R\in SO(3)$. For Filter I, as $r$ gets larger the value of $k_{u1}$, which corresponds to the bound on the allowed disturbances, gets smaller. This fact suggests that as we start closer to large attitude errors the robustness to small measurement gyro disturbances may be lost. In contrast, for Filter III for example, for any large $r>$ we can always choose the parameter $\epsilon$ small enough to such that  $k_{u3}$ is also large and therefore the allowed bound on the gyro disturbances are much larger. To put this together, by setting $\epsilon$ small enough such that $\epsilon<r^2/(1+r^2)$, it can be verified that 
$$
k_{u1}<k_{u2}<k_{u3}.
$$
Consequently, it can be concluded that Filter III has the best robustness to gyro measurement errors while Filter I exhibits a reduced robustness compared to the other two proposed filters. Moreover, by letting $\epsilon\to 0$ and $r\to\infty$, it can be noticed that $k_{u1}\to 0, k_{u2}\to\rho\lambda_{\min}^{\bar A}$ and $k_{u3}\to\infty$ which (in this case) leads to conclude that Filter I is not ISS, Filter II is ISS with respect to all disturbances such that $\sup_{t\geq 0}\|n_\omega(t)\|<\varrho\lambda_{\min}^{\bar A}$ and Filter III has the Global ISS property.

\begin{proof}[Proof of Theorem \ref{theorem::ISS}]
Consider the following ISS-Lyapunov functions candidate
$$
V_i(\mathcal{Z}(\tilde R))=\frac{1}{2}\|\mathcal{Z}(\tilde R(t))\|^2,\;i=1,2,3.
$$
The time derivative of $V_1$ (resp. $V_2$ and $V_3$) along the trajectories of \eqref{dZ1} (resp. \eqref{dZ2} and \eqref{dZ3}) satisfies
\begin{align*}
\dot{V}_i(\mathcal{Z}(\tilde R))&=-k_i(\mathcal{Z}(\tilde R))\mathcal{Z}(\tilde R)^\top\bar A\mathcal{Z}(\tilde R)-\\
												&\hspace{0.5cm}\frac{1}{2}(1+\|\mathcal{Z}(\tilde R)\|^2)\mathcal{Z}(\tilde R)^\top\hat Rn_\omega\\
										    &\leq-\lambda_{\min}^{\bar A}(1-\varrho)k_i(\mathcal{Z}(\tilde R))\|\mathcal{Z}(\tilde R)\|^2+\\
										    &\hspace{0.5cm}\frac{1}{2}\|\mathcal{Z}(\tilde R)\|(1+\|\mathcal{Z}(\tilde R)\|^2)\\
										    &\hspace{0.5cm}\left(\|n_\omega\|-\frac{2k_i(\mathcal{Z}(\tilde R))\varrho\lambda_{\min}^{\bar A}\|\mathcal{Z}(\tilde R)\|}{1+\|\mathcal{Z}(\tilde R)\|^2}\right),
\end{align*}
for $i=1,2,3$. Assume that $\|\mathcal{Z}(\tilde R)\|\leq r$. Then, it is clear that for all $\|\mathcal{Z}(\tilde R)\|\geq\rho(\|n_\omega\|)$, with $\rho(s)=\gamma(s)=(1+r^2)s/2k_i(r)\varrho\lambda_{\min}^{\bar A}$, one has
\begin{align*}
\dot{V}_i(\mathcal{Z}(\tilde R))\leq -\lambda_{\min}^{\bar A}(1-\varrho)k_i(\mathcal{Z}(\tilde R))\|\mathcal{Z}(\tilde R)\|^2.
\end{align*}
Applying the result of Lemma \ref{lemma::ISS1} concludes the proof.
\end{proof}
Another interesting feature of the attitude estimation scheme given by Filter III is demonstrated in the following theorem when the tuning scalar $\epsilon\to 0$.
\begin{theorem}\label{theorem::optimal}
Consider the attitude kinematics system \eqref{kinematic} coupled with the attitude observer  \eqref{Filter_III}. Assume that $\tilde R(0)\in SO(3)\setminus\Pi$. Then the innovation term $\sigma$ in \eqref{Filter_III}, with $A=aI>0$ and $\epsilon=0$, minimizes the following cost functional
\begin{multline}\label{J}
J(\sigma)=\sup_{n_\omega\in\mathcal{N}}\Big\{\lim_{t\to+\infty}\Big[2\ln(1+\|\mathcal{Z}(\tilde R(t))\|^2)+\\\int_0^t\big((2a-\frac{1}{\gamma^2})\mathcal{Z}(\tilde R)^\top\mathcal{Z}(\tilde R)+\frac{1}{2a}\sigma^\top\sigma-\gamma^2n_\omega^\top n_\omega\big)d\tau\Big]\Big\},
\end{multline}
where $\gamma^2>\frac{1}{2a}$ and $\mathcal{N}$ is the set of locally bounded disturbances, with a value function $J^*=2\ln(1+\|\mathcal{Z}(\tilde R(0))\|^2)$. Moreover, the achieved disturbance attenuation level is
\begin{multline}
\big(4a-\frac{1}{\gamma^2}\big)\int_0^{\infty}\mathcal{Z}(\tilde R(t))^\top\mathcal{Z}(\tilde R(t))dt\leq\\\gamma^2\int_0^{\infty}n_\omega(t)^\top n_\omega(t)dt+2\ln(1+\|\mathcal{Z}(\tilde R(0))\|^2).
\end{multline}
\end{theorem}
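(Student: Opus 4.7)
The plan is to treat Theorem \ref{theorem::optimal} as a Hamilton--Jacobi--Isaacs (HJI) differential game and verify that $V(\mathcal{Z}) := 2\ln(1+\|\mathcal{Z}\|^2)$ is the value function. First, I would repeat the derivation that led to \eqref{dZ1}--\eqref{dZ3}, but this time keep the innovation $\sigma$ as a free control rather than prescribing $\sigma_{k_3}$. A computation identical to \eqref{dtilde_R}--\eqref{dtilde_Z}, together with $\omega_y=\omega+n_\omega$, gives $\dot{\tilde R}=\tilde R[\sigma-\hat R n_\omega]_\times$, and invoking \eqref{dZ} yields the free-control Rodrigues dynamics
\begin{equation*}
\dot{\mathcal{Z}}(\tilde R)=g(\mathcal{Z}(\tilde R))\bigl(\sigma-\hat R n_\omega\bigr),
\end{equation*}
where $g(\mathcal{Z})=\tfrac{1}{2}(I+[\mathcal{Z}]_\times+\mathcal{Z}\mathcal{Z}^\top)$. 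The key algebraic identity that drives everything else is $\mathcal{Z}^\top g(\mathcal{Z})=\tfrac{1}{2}(1+\|\mathcal{Z}\|^2)\mathcal{Z}^\top$, which uses $\mathcal{Z}^\top[\mathcal{Z}]_\times=0$.

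Computing $\nabla V(\mathcal{Z})=4\mathcal{Z}/(1+\|\mathcal{Z}\|^2)$ and combining with the identity above collapses the Lie derivative to $\dot V=2\mathcal{Z}^\top(\sigma-\hat R n_\omega)$, with no remaining dependence on the skew or rank-one parts of $g$. I would then set up the HJI equation
\begin{equation*}
0 = \min_\sigma \sup_{n_\omega}\Bigl[\dot V + \bigl(2a-\tfrac{1}{\gamma^2}\bigr)\mathcal{Z}^\top\mathcal{Z}+\tfrac{1}{2a}\sigma^\top\sigma-\gamma^2 n_\omega^\top n_\omega\Bigr].
\end{equation*}
Completing the square separately in $n_\omega$ yields the maximizer $n_\omega^\star=-\hat R^\top\mathcal{Z}/\gamma^2$, contributing $+\|\mathcal{Z}\|^2/\gamma^2$, and completing the square in $\sigma$ yields the minimizer $\sigma^\star=-2a\mathcal{Z}$, contributing $-2a\|\mathcal{Z}\|^2$. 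The saddle residual is therefore $-2a\|\mathcal{Z}\|^2+\|\mathcal{Z}\|^2/\gamma^2+(2a-1/\gamma^2)\|\mathcal{Z}\|^2 = 0$, so HJI is satisfied identically on $SO(3)\setminus\Pi$. The standing assumption $\gamma^2>1/(2a)$ is what makes the state penalty $(2a-1/\gamma^2)$ strictly positive, so that the quadratic game is well-posed.

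To confirm that the optimal $\sigma^\star=-2a\mathcal{Z}$ coincides with the Filter III innovation in the stated regime, note that $A=aI$ implies $\bar A=aI$; then \eqref{psiA_Z} with $(I-[\mathcal{Z}]_\times)\mathcal{Z}=\mathcal{Z}$ gives $\psi(A\tilde R)=2a\mathcal{Z}/(1+\|\mathcal{Z}\|^2)$, while $1-|\tilde R|_I^2=1/(1+\|\mathcal{Z}\|^2)$ (from \eqref{pf::cor1::eq1}) yields $k_3(\tilde R)|_{\epsilon=0}=1+\|\mathcal{Z}\|^2$, so indeed $\sigma_{k_3}=-2a\mathcal{Z}=\sigma^\star$. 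Integrating the saddle identity $\dot V+L\equiv 0$ (at $\sigma^\star,n_\omega^\star$) and passing to $t\to+\infty$ telescopes to $J^\star=V(\mathcal{Z}(0))=2\ln(1+\|\mathcal{Z}(0)\|^2)$. For the disturbance-attenuation inequality, I would inject only $\sigma=\sigma^\star$ (leaving $n_\omega$ arbitrary) and apply Young's inequality $-2\mathcal{Z}^\top\hat R n_\omega\leq \|\mathcal{Z}\|^2/\gamma^2+\gamma^2\|n_\omega\|^2$ (using $\|\hat R^\top\mathcal{Z}\|=\|\mathcal{Z}\|$) to obtain $\dot V\leq -(4a-1/\gamma^2)\|\mathcal{Z}\|^2+\gamma^2\|n_\omega\|^2$; integration from $0$ to $\infty$ and the positivity $V\geq 0$ then reproduce the stated $L^2$-gain bound.

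The main obstacle I anticipate is not the HJI algebra, which collapses cleanly thanks to the identity $\mathcal{Z}^\top g(\mathcal{Z})=\tfrac{1}{2}(1+\|\mathcal{Z}\|^2)\mathcal{Z}^\top$, but rather the functional-analytic justification of the $\sup$--$\lim$ interchange used to pass from the pointwise HJI to the supremum defining $J(\sigma)$ in \eqref{J}. In particular, one must verify that for every admissible $n_\omega\in\mathcal N$ the optimal trajectory stays in $SO(3)\setminus\Pi$ so that $\mathcal{Z}$ and $V$ remain finite, and that the terminal term $V(\mathcal{Z}(t))$ can be handled in the limit; the sign condition $\gamma^2>1/(2a)$ is precisely what secures this forward invariance along $\sigma^\star$ and turns the saddle-point equality into a genuine value-function identity rather than a formal one.
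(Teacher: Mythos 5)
Your proof is correct and arrives at exactly the same objects as the paper -- the logarithmic storage function, the collapsed Lie derivative $\dot V=2\mathcal{Z}^\top(\sigma-\hat Rn_\omega)$ via $\mathcal{Z}^\top g(\mathcal{Z})=\tfrac12(1+\|\mathcal{Z}\|^2)\mathcal{Z}^\top$, and the optimal innovation $\sigma^\star=-2a\mathcal{Z}$ -- but it certifies optimality by a different mechanism. The paper does not write down the HJI equation at all: it sets $W_1=\gamma^2I$, $W_2=\tfrac{1}{2a}I$, forms the auxiliary system $\dot{\mathcal{Z}}=W_1^{-1}g(L_gV)^\top+g\sigma$ with $\sigma=\tfrac12\alpha=-W_2^{-1}(L_gV)^\top$, checks that this auxiliary system is globally asymptotically stable precisely when $2a-1/\gamma^2>0$, and then invokes the inverse-optimal $\mathcal{H}_\infty$ theorem of Krsti\'c and Deng (their Theorem 5.1) to read off both the cost $l(x)=(2a-1/\gamma^2)x^\top x$ and the value function; the attenuation bound is then obtained by substituting $\sigma=\alpha$ into $J$ and using $J(\sigma)\le J^*$ rather than by your Young's-inequality dissipation argument. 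Your direct completion-of-squares verification is essentially the proof of that cited theorem specialized to this system, so it is more self-contained and has the added benefit of exhibiting the worst-case disturbance $n_\omega^\star=-\hat R^\top\mathcal{Z}/\gamma^2$ explicitly; what the paper's citation buys is that the well-posedness of the sup--lim construction and the passage from the pointwise saddle identity to the value-function statement -- exactly the functional-analytic issues you flag in your last paragraph -- are absorbed into the hypotheses of the quoted theorem (GAS of the auxiliary system), rather than having to be argued by hand. Your observation that the dissipation inequality under $\sigma^\star$ keeps $V$, hence $\|\mathcal{Z}\|$, bounded and so preserves forward invariance of $SO(3)\setminus\Pi$ is the right way to close that remaining gap, and is in fact more explicit than what the paper provides.
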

\begin{proof}
Recall from \eqref{dZ} and \eqref{dtilde_R} that the dynamics of the Rodrigues vector are written as
\begin{align}\nonumber
\frac{d}{dt}\mathcal{Z}(\tilde R)&=\frac{1}{2}\big(I+[\mathcal{Z}(\tilde R)]_\times+\mathcal{Z}(\tilde R)\mathcal{Z}(\tilde R)^\top\big)\big(\sigma-\hat Rn_\omega\big)\\
											   &:=g(\mathcal{Z}(\tilde R))\big(\sigma-\hat Rn_\omega\big).\nonumber
\end{align}
Consider the following Lyapunov function candidate
\begin{equation}
V(\mathcal{Z}(\tilde R))=\frac{1}{2}\ln(1+\|\mathcal{Z}(\tilde R)\|^2).
\end{equation}
The Lie Derivative of $V$ along $g$ satisfies
\begin{align*}
L_gV(\mathcal{Z}(\tilde R))&=\nabla V(\mathcal{Z}(\tilde R))g(\mathcal{Z}(\tilde R))\\
										  &=\frac{\mathcal{Z}(\tilde R)^\top}{1+\|\mathcal{Z}(\tilde R)\|^2}g(\mathcal{Z}(\tilde R))\\
										  &=\frac{1}{2}\mathcal{Z}(\tilde R)^\top.
\end{align*}
Let $W_1=\gamma^2I$ and consider the following auxiliary system
\begin{align}\label{auxiliary}
\frac{d}{dt}\mathcal{Z}(\tilde R)=W_1^{-1}g(\mathcal{Z}(\tilde R))\big(L_gV(\mathcal{Z}(\tilde R))\big)^\top+g(\mathcal{Z}(\tilde R))\sigma.
\end{align}
with
$
\sigma=\frac{1}{2}\alpha(\mathcal{Z}(\tilde R)):=-W_2^{-1}\big(L_gV(\mathcal{Z}(\tilde R))\big)^\top=-a\mathcal{Z}(\tilde R),
$
where $W_2=\frac{1}{2a}I$. Then, the auxiliary system \eqref{auxiliary} becomes
\begin{align}
\frac{d}{dt}\mathcal{Z}(\tilde R)=-\frac{2a-\frac{1}{\gamma^2}}{4}\big(1+\|\mathcal{Z}(\tilde R)\|^2\big)\mathcal{ Z}(\tilde R),
\end{align}
which is clearly globally asymptotically stable as long as the scalar $2a-\frac{1}{\gamma^2}$ is strictly positive.
Consequently, using the result of \cite[Theorem 5.1]{krstic1998inverse}, it follows that
$$
\sigma=\alpha(\mathcal{Z}(\tilde R))=-2a\mathcal{Z}(\tilde R)=-a\frac{\psi(\tilde R)}{1-|\tilde R|_I^2}
$$
solves the inverse optimal $\mathcal{H}_\infty$ problem by minimizing the cost functional
\begin{multline}
J(\sigma)=\sup_{n_\omega\in\mathcal{N}}\Big\{\lim_{t\to+\infty}\Big[2\ln(1+\|\mathcal{Z}(\tilde R(t))\|^2)+\\\int_0^t\big(l(\mathcal{Z}(\tilde R))+\sigma^\top W_2\sigma-n_\omega^\top W_1n_\omega\big)d\tau\Big]\Big\},
\end{multline}
where
\begin{align*}
&l(x)\\
&=-4\big(L_gV(x)W_1^{-1}L_gV(x)^\top-L_gV(x)W_2^{-1}L_gV(x)^\top\big)\\
&=\big(2a-\frac{1}{\gamma^2}\big)x^\top x.
\end{align*}
Substituting $\sigma=\alpha(\mathcal{Z}(\tilde R))$ in \eqref{J} and using the fact that $J(\sigma)\leq J^*$ it follows that
\begin{multline}
\int_0^\infty\big((4a-\frac{1}{\gamma^2})\mathcal{Z}(\tilde R)^\top\mathcal{Z}(\tilde R)-\gamma^2n_\omega^\top n_\omega\big)dt\leq \\J^*=2\ln(1+\|\mathcal{Z}(\tilde R(0))\|^2),
\end{multline}
which proves the result.
\end{proof}
Theorem \ref{theorem::optimal} shows that the choice of the observer innovation term $\sigma$ in \eqref{Filter_III} in the ideal case where $\epsilon\to 0$ solves a meaningful inverse optimal $\mathcal{H}_\infty$ optimization problem. Moreover, a bound on the disturbance attenuation level is obtained. This result leads naturally to conclude on the ISS-type robustness of the attitude estimation scheme \eqref{Filter_III}, when $A=aI$ and $\epsilon=0$. 
\subsection{Robustness Study to Attitude Errors}
In this subsection, we assume that the attitude information $R_y$ is obtained according to \eqref{R::measured} for some \textit{small} perturbation attitude matrix $N_R\in SO(3)$. We also consider perfect gyro measurements such that $\omega_y\equiv \omega$. This allows us to study the two robustness problems separately. 

The new ``available" attitude error is given by $\tilde R_y=R_y\hat R^\top=N_RR\hat R=N_R\tilde R$. The contaminated attitude error $\tilde R_y$ will be used in the innovation term $\sigma$ in \eqref{Filter_I}, \eqref{Filter_II} and \eqref{Filter_III} for the three different versions of the nonlinear complimentary filter. For simplicity of discussions, we consider in this work only the case where $N_R$ is a perturbation rotation of small angle in the direction of the rotation $\tilde R$. Explicitly, if the orientation $\tilde R$ is described by $\mathcal{R}_a(\theta,u)$ for some $\theta\in\mathbb{R}$ and $u\in\mathbb{S}^2$ then we consider $N_R=\mathcal{R}_a(n_\theta,u)$ for some small $n_\theta\ll 1$. This implies that the available attitude error satisfies $\tilde R_y=\mathcal{R}_a((\theta+n_\theta),u)$. Using the fact that $\mathcal{Z}(\mathcal{R}_a(x,v))=\tan(x/2)v$ for all $x\in\mathbb{R}$ and $v\in\mathbb{S}^2$, it can be verified that 
\begin{align*}
\mathcal{Z}(\tilde R_y)&=\frac{\tan(\theta/2)+\tan(n_\theta/2)}{1-\tan(\theta/2)\tan(n_\theta/2)}u\\
									&\simeq\frac{\tan(\theta/2)+n_\theta/2}{1-\tan(\theta/2)n_\theta/2}u\\
									&=\mathcal{Z}(\tilde R)+\frac{n_\theta(1+\|\mathcal{Z}(\tilde R)\|^2)}{2-n_\theta\|\mathcal{Z}(\tilde R)\|}u,
\end{align*}
where we have used the following first order approximations $\tan(x)\simeq x$ for all small enough $x\in\mathbb{R}$. Now, we need to re-evaluate the expression of the innovation term $\sigma$ in terms of $\mathcal{Z}(\tilde R_y)$. In view of \eqref{psiA_Z}, the expression of $\sigma$ for the three filters is given by
$$
\sigma=-2k_i(\mathcal{Z}(\tilde R_y))\frac{(I-[\mathcal{Z}(\tilde R_y)]_\times)}{1+\|\mathcal{Z}(\tilde R_y)\|^2}\bar A\mathcal{Z}(\tilde R_y),\; i=1,2,3.
$$
Moreover, one has
$$
1+\|\mathcal{Z}(\tilde R_y)\|^2=\frac{(n_\theta^2+4)(1+\|\mathcal{Z}(\tilde R)\|^2)}{(2-n_\theta\|\mathcal{Z}(\tilde R)\|)^2}\simeq \frac{4(1+\|\mathcal{Z}(\tilde R)\|^2)}{(2-n_\theta\|\mathcal{Z}(\tilde R)\|)^2},
$$
where the second order term in $n_\theta^2$ was neglected ($n_\theta\ll 1$).
On the other hand, recall that the dynamics of $\mathcal{Z}(\tilde R)$ satisfies $\dot{\mathcal{Z}}(\tilde R)=g(\mathcal{Z}(\tilde R))\sigma$ which implies that
\begin{align*}
&\frac{1}{2}\frac{d}{dt}\|\mathcal{Z}(\tilde R)\|^2\\
&=\mathcal{Z}(\tilde R)^\top g(\mathcal{Z}(\tilde R))\sigma\\
																		&=\frac{1}{2}(1+\|\mathcal{Z}(\tilde R)\|^2)\mathcal{Z}(\tilde R)^\top\sigma\\
																		&=-\frac{k_i(\mathcal{Z}(\tilde R_y))}{4}\mathcal{Z}(\tilde R)^\top\bar A\mathcal{Z}(\tilde R_y)(2-n_\theta\|\mathcal{Z}(\tilde R)\|)^2\\
																		&\lesssim-k_i(\mathcal{Z}(\tilde R_y))\lambda_{\min}^{\bar A}\|\mathcal{Z}(\tilde R)\|^2-\\
																		&\qquad\frac{1}{2}k_i(\mathcal{Z}(\tilde R_y))\lambda_{\min}^{\bar A}n_\theta\|\mathcal{Z}(\tilde R)\|(1-\|\mathcal{Z}(\tilde R)\|^2),
\end{align*}
where again higher order terms in $n_\theta$ were neglected. Therefore, one concludes that 
$$
\frac{1}{2}\frac{d}{dt}\|\mathcal{Z}(\tilde R)\|^2\leq-\lambda_{\min}^{\bar A}(1-\varrho)k_i(\mathcal{Z}(\tilde R_y))\|\mathcal{Z}(\tilde R)\|^2,
$$
for all $\|\mathcal{Z}(\tilde R)\|\geq\rho(|n_\theta|)$ such that $\|\mathcal{Z}(\tilde R(0))\|<r$ and $\rho(s)=(1+r^2)s/2$. Therefore, according to Lemma \ref{lemma::ISS1}, the dynamics of $\mathcal{Z}(\tilde R)$ (for all three filters) are LISS for all $\|\mathcal{Z}(\tilde R(0))\|<r$ and $\sup_{t\geq 0}|n_\theta(t)|<2r/(1+r^2)$. As it is noticed, the upper bound on the allowed attitude measurements $n_\theta(t)$ decreases as the initial condition gets larger. This results can be intuitively explained by the fact that, for large attitude errors close enough to $180^{\circ}$, the attitude noise can \textit{mislead} the innovation term $\psi(A\tilde R)$ to change the direction of the correction and therefore a correction is applied in the wrong direction which causes the attitude to get closer to $180^{\circ}$. If we are unlucky enough, small noise can cause chattering near the undesired manifold of all rotations of angle $180^{\circ}$. The reader is referred to some recent works on hybrid observers on $SO(3)$ where hysteresis-like switching mechanisms have been proposed to guarantee global stability results with robustness to small measurements noise \cite{lee2015observer,berkaneACC2016observer,berkaneCDC2016observer}.
\section{Vector measurements formulation of the proposed coplementary filters on $SO(3)$}\label{section::almost}
The nonlinear complementary filters discussed in the previous sections were written in terms of the attitude information $R_y(t)$ which is not available, in practice, directly using any sensor. However, body-frame measurements of constant known inertial vectors can be obtained using different sensors such as accelerometers, magnetometers, star trackers, cameras...etc. We assume we have $n\geq 2$ vector measurements
$$
b_i=R^\top r_i,\; i=1,\cdots,n,
$$
where $r_i$ are some known constant inertial vectors. Moreover, we assume that at least two vector measurements $b_i$ are noncollinear. This is a standard assumption in attitude estimation which is necessary to recover the full attitude information from the available data. To implement one of the discussed attitude filters on $SO(3)$ in practice, we need to reconstruct the  attitude matrix $R_y$ using some static attitude determination algorithms such that $R_y=f_{\textrm{reconst}}((b_i,r_i)_{1\leq i\leq n})$. Obviously if the measurements $b_i$ are perfect then the reconstruction gives perfect attitude such that $R_y\equiv R$. However, this is not realistic as the noise in the vector measurements $b_i$ is probably to propagate to $R_y$. Attitude reconstruction schemes are likely to be senstitive to noise which motivates \cite{Mahony2008} to explicitly formulate the traditional nonlinear complementary filter using directly available measurements $b_i$.

To do so, we use the results from \cite[Proposition 5]{berkane2015construction} to derive the following identities
\begin{align}\label{explicit::psiA}
\psi(AR\hat R^\top)=\frac{1}{2}\hat R\sum_{i=1}^n\rho_i(b_i\times\hat R^\top r_i),
\end{align}
where $A=\sum_{i=1}^n\rho_ir_ir_i^\top$ such that $\rho_i, i=1,\cdots n$ are positive scalars. Note that under the assumption that two vectors $b_1$ and $b_2$ are noncollinear, one guarantees that the positive semidefinite matrix $A$ has rank greater or equal $2$. Therefore, the matrix $\bar A=\frac{1}{2}(\mathrm{tr}(A)-A)$ can be shown to have full rank (positive definite) which allows to use it in \eqref{Filter_I}, \eqref{Filter_II} and \eqref{Filter_III}. It remains to express the norm $|\tilde R|_I^2$ which appears in \eqref{Filter_II} and \eqref{Filter_III} in the expression of the state-dependent gains.

Let $b_1$ and $b_2$ be two (non-collinear) body-frame vector measurements corresponding to the inertial unit vectors $r_1$ and $r_2$ such that $b_1=R^\top r_1$ and $b_2=R^\top r_2$. Let us define the vectors $u_1=r_1/\|r_1\|, u_2=(r_1\times r_2)/\|r_1\times r_2\|$ and $u_3=u_1\times u_2$ along with their corresponding body-frame vectors $w_1=b_1/\|b_1\|, w_2=(b_1\times b_2)/\|b_1\times b_2\|$ and $w_3=w_1\times w_2$. Then, one can verify that
\begin{align}\label{explicit::normR}
|\tilde R|_I^2&=\frac{1}{8}\sum_{i=1}^3\|w_i-\hat R^\top u_i\|^2,
\end{align}
which is a quite convenient formula for the computation of $k_2(\tilde R)$ and $k_3(\tilde R)$ in \eqref{Filter_II} and \eqref{Filter_III}. Consequently, the results in \eqref{explicit::psiA} and \eqref{explicit::normR} allow to write the proposed attitude filters in \eqref{Filter_I}, \eqref{Filter_II} and \eqref{Filter_III} explicitly in terms of vector measurements without the need to reconstruct the attitude matrix $R_y$.
\section{Implementation Aspects and Numerical results}
This section presents numerical examples and comparisons among the nonlinear complimentary attitude filters discussed in this paper. First, we derive the discrete-version of the nonlinear complementary filter on $SO(3)$ for practical implementation purposes. The class of nonlinear complementary filters on $SO(3)$ discussed in this paper can be written, in the continuous setting, as 
\begin{align}\label{dhatR_integration}
\dot{\hat R}(t)=\hat R(t)[\hat\omega(t)]_\times,\quad\hat R(0)\in SO(3),
\end{align}
where estimated angular velocity $\hat\omega$ is given by
$$
\hat\omega(t)=\omega_y(t)-\hat R^\top(t)\sigma(t)
$$ and $\sigma(t)=-k(R_y(t)\hat R^\top(t))\psi(AR_y(t)\hat R^\top(t))$ such that $k(\cdot)$ depends on the type of filter used (Filter I, Filter II and Filter III). Assume that during the time interval $[t_k,t_{k+1})$, where $k\in\mathbb{N}$ and $t_0=0$, the estimated angular velocity $\hat\omega(t)$. This is a realistic assumption for small integration step sizes. Consequently in view of \eqref{dhatR_integration} it follows that
$$
\frac{d}{dt}\left(\hat Re^{[\hat\omega(t_k)t]_\times}\right)=0,\quad t\in[t_k,t_{k+1}).
$$
Exact integration of the above equation between $t_k$ and $t_{k+1}$ yields the following update step on $SO(3)$
\begin{align}\label{dhatR_discrete}
\hat R(t_{k+1})=\hat R(t_k)e^{[\hat\omega(t_k)(t_{k+1}-t_k)]_\times},\quad k\in\mathbb{N}.
\end{align}
Note that the exponential map on $SO(3)$ has a compact formula for quick computation (instead of using high order Taylor series) given by the map $\mathcal{R}_a$ in \eqref{Rod_formula} such that $e^{[x]_\times}=\mathcal{R}_a(\|x\|,x/\|x\|)$ for all $x\in\mathbb{R}^3$. Moreover, it is worth pointing out that the discrete integration rule \eqref{dhatR_discrete} can be lifted to the quaternion space (using the quaternion multiplication rule) to simplify the computations. The resulting integration scheme can be verified to be equivalent to the discrete quaternion integration proposed in \cite{Hua2014}.

Consider the kinematics of the attitude system \eqref{kinematic} with the following angular velocity input signal
$$
\omega(t)=\begin{bmatrix}
\sin(0.3t)\\
0.7\sin(0.2t+\pi)\\
0.5\sin(0.1t+\pi/3)
\end{bmatrix}(\mathrm{rad/s}),
$$
and initial condition $R(0)=I$. We implement a simulation of the real kinematic system \eqref{kinematic} through the integration scheme on $SO(3)$ above using a high sampling rate of $1000$ Hz. We assume that the gyro measurements of the angular velocity are obtained at $200$ Hz and are contaminated by a white noise with zero mean and standard standard deviation equals $0.1(\mathrm{rad/s})$, see Fig. \ref{wy}. 
\begin{figure}[h!]
\centering
\includegraphics[scale=0.35]{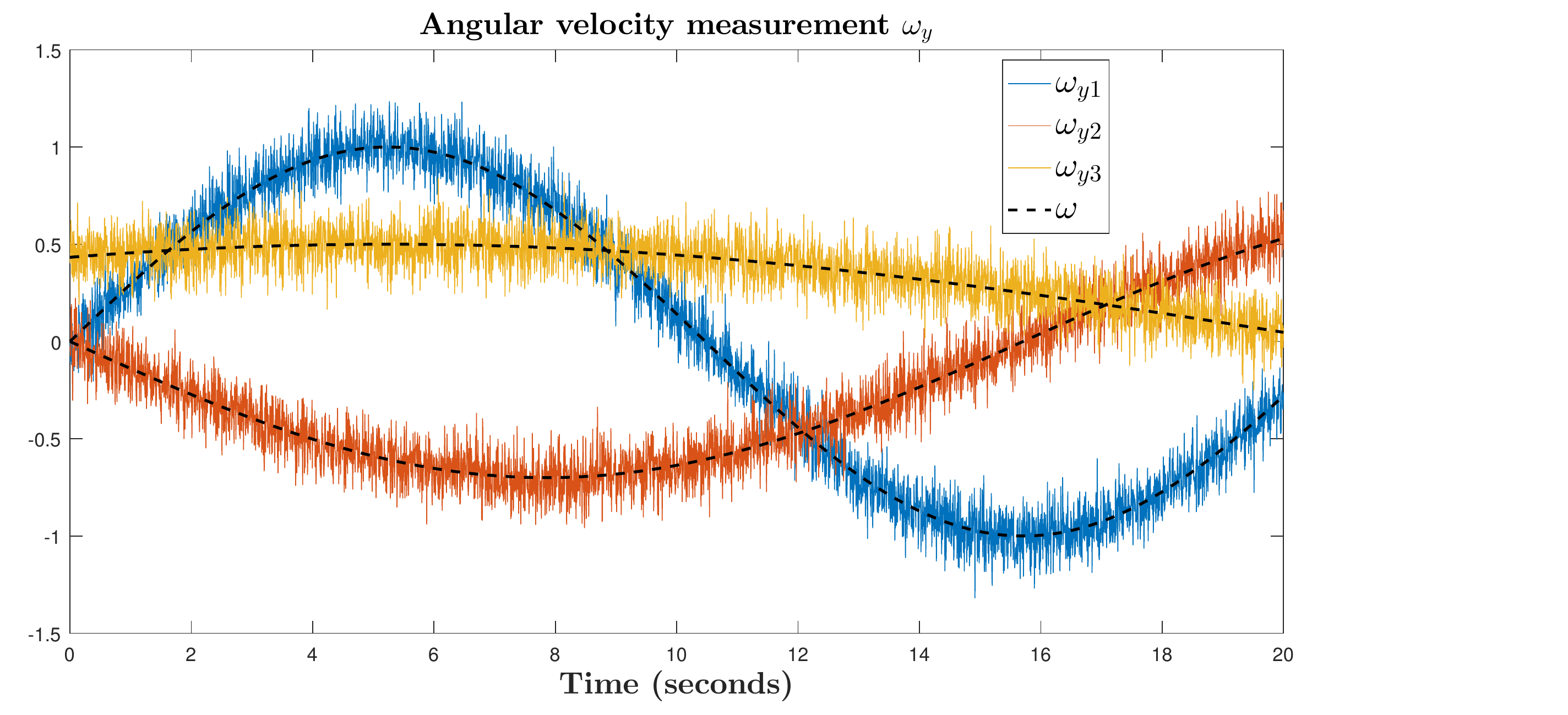}
\caption{}
\label{wy}
\end{figure}
We also consider body-frame measurements $b_1$ and $b_2$ (same sampling frequency of $200$ Hz) of two non-collinear inertial vectors given by $r_1=[1,-1,1]^{\top}/\sqrt{3}$ and $r_2=[0,0,1]^{\top}$. We also consider additional white noise with zero mean and standard deviation equals 0.1 for both vector measurements $b_1$ and $b_2$.
\begin{figure}[h!]
\centering
\includegraphics[scale=0.35]{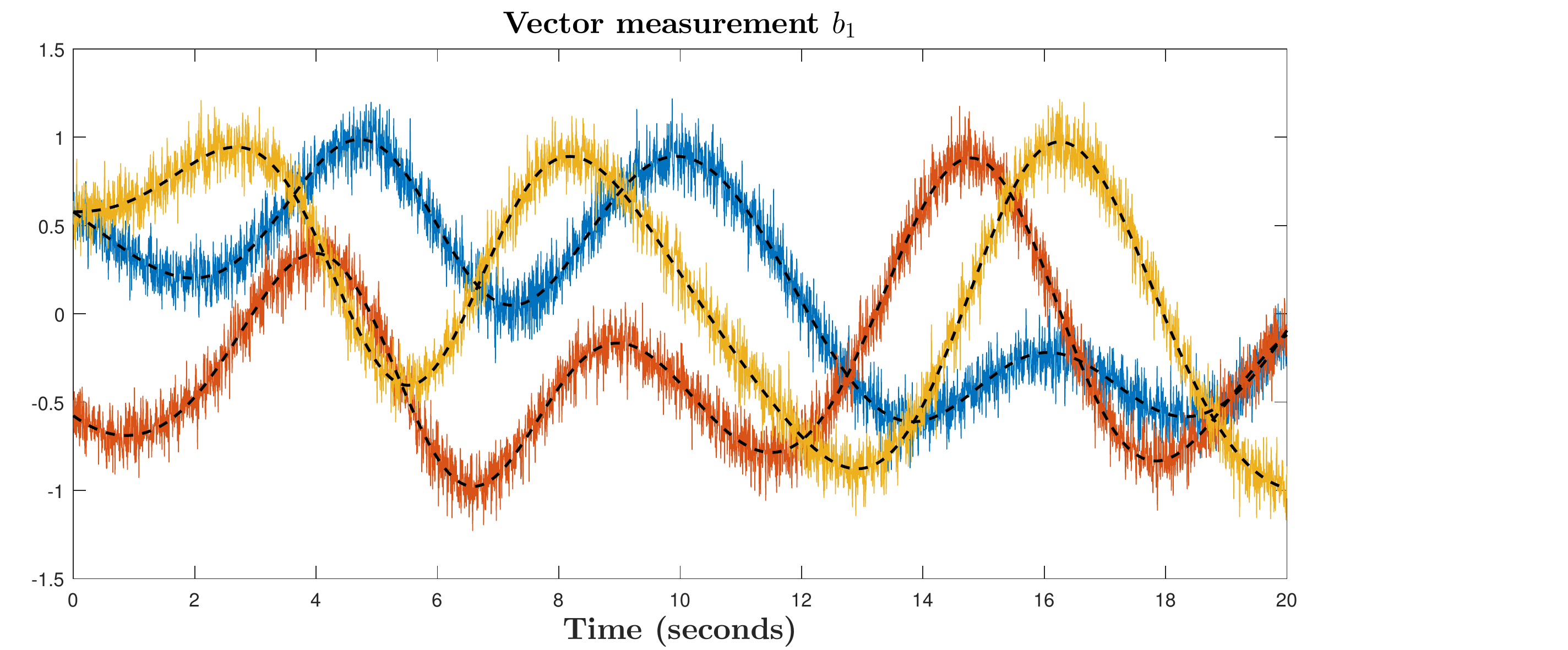}
\caption{}
\label{b1}
\end{figure}
\begin{figure}[h!]
\centering
\includegraphics[scale=0.35]{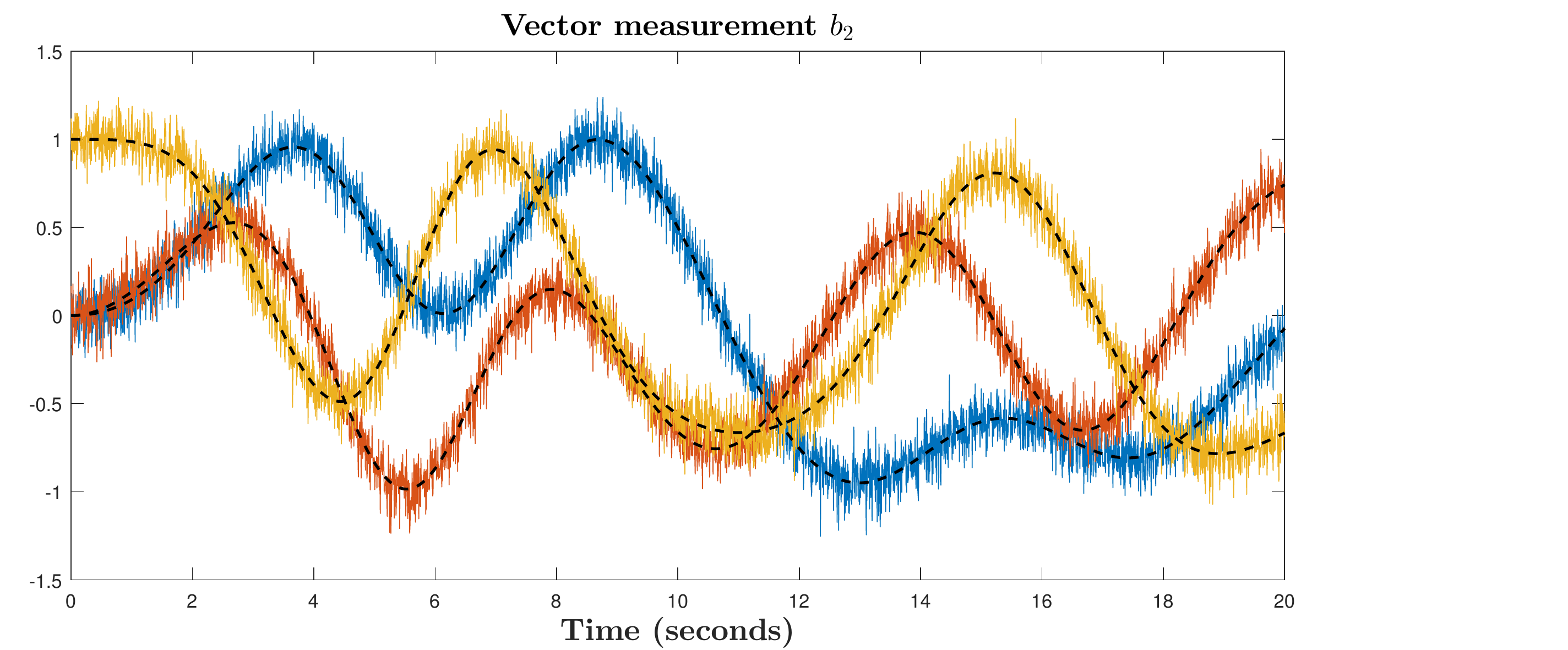}
\caption{}
\label{b2}
\end{figure}
All attitude errors are initialized at an attitude $\hat R(0)=\mathcal{R}_a(\pi-10^{-1},[1,0,0]^\top)$. The two vectors $r_1$ and $r_2$ are weighted with gains $\rho_1=1$ and $\rho_2=2$, respectively. Therefore, the corresponding weighting matrix is given by
 $$
 A=\rho_1r_1r_1^\top+\rho_2r_2r_2^\top=\frac{1}{3}\begin{bmatrix}
 1 &-1 &1\\
 -1&1&-1\\
 1&-1&7
 \end{bmatrix}.
 $$
The formulas \eqref{explicit::psiA}-\eqref{explicit::normR} are used to explicitly express the innovation term $\sigma$ for the three different filters (Filter I, Filter II and Filter III). The parameter $\epsilon$ used in Filter II and Filter III innovation term is chosen small and equals $\epsilon=10^{-2}$. The updated attitude estimates $\hat R$ are obtained at a frequency of $200$ Hz which corresponds to the frequency of the measurements.
%
%
%
\begin{figure}[h!]
\centering
\includegraphics[scale=0.35]{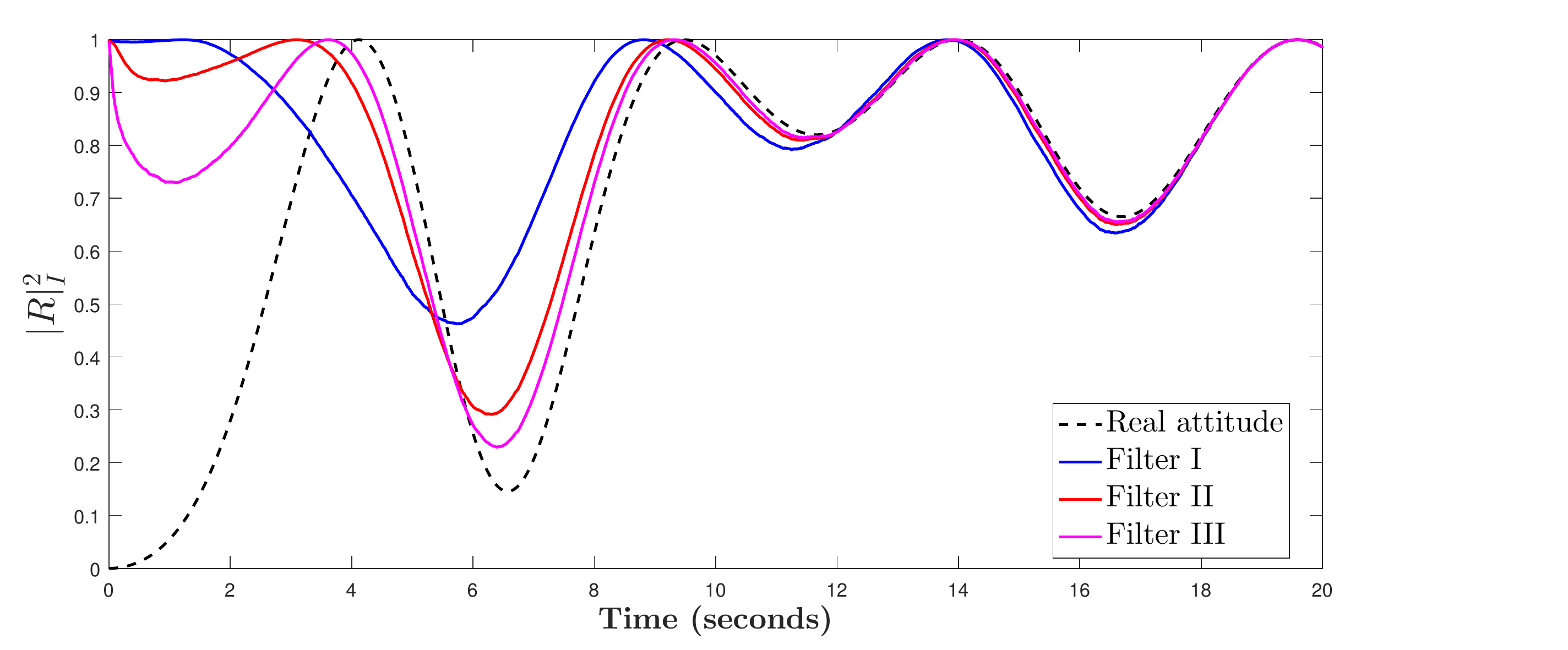}
\caption{Attitude estimates norm versus time for Filter I, Filter II and Filter III.}
\label{figure::normR}
\end{figure}
Attitude estimates norms for the three discussed filters are plotted in Fig. \ref{figure::normR}. As expected, Filter II and Filter III behave better than the constant gain filter (Filter I) int terms of convergence rate. Especially Filter III is able to correct its attitude in a faster time compared to the two other filters. Note that the three filters have an identical behaviour near the origin of attitude error so no performance is lost (locally) when introducing the state dependent gain filters.
\begin{figure}[h!]
\centering
\includegraphics[scale=0.3]{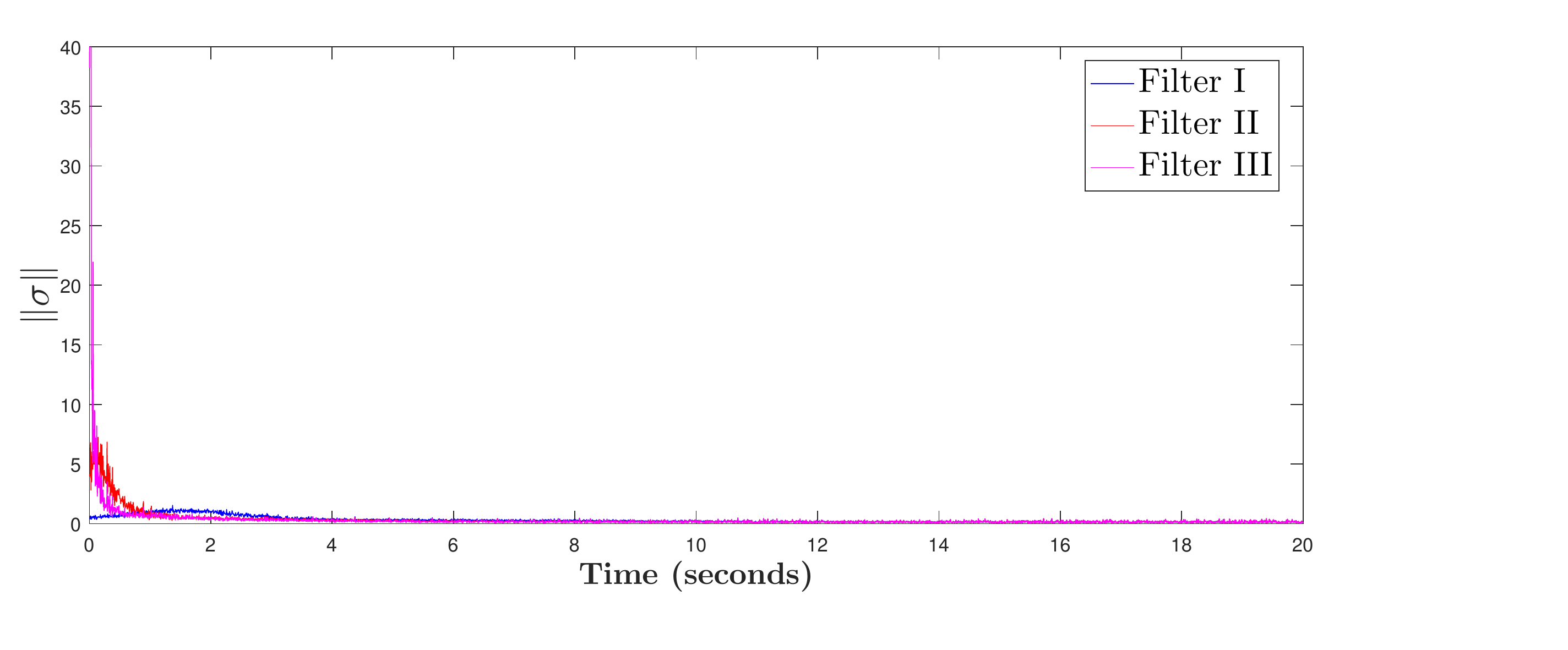}
\caption{Innovation term $\sigma$ versus time.}
\label{figure::sigma}
\end{figure}
The innovation term $\sigma$ for the three filters is plotted in Fig. \ref{figure::sigma}. It can be seen that Filter III innovation term is very aggressive at initial times compared to the two other filters. Although this allows fast correction of the attitude. Filter II has a relatively less aggressive (compared to Filter III) correction while maintaining a considerably good speed (compared to Filter I).

\section{Conclusion}
The traditional nonlinear complementary filter on $SO(3)$ has been revisited and an explicit time-solution of the resulting attitude estimation error has been provided in the bias-free case. Almost global asymptotic (and local exponential) stability properties of this filter, that are usually determined using complex proofs, are easily deduced from the obtained closed form solution. The robustness of this filter has also been investigated and it has been shown that this filter is not ISS with respect to gyro measurement disturbances. As an alternative solution, we consider two nonlinear complementary filters (with state-dependent gains) and provide explicit solutions for the resulting estimation error. It is shown that these proposed observers lead to better results in terms of convergence and robustness to measurement errors. The domain of Local ISS (with respect to gyro errors) for the three discussed nonlinear complementary filters on $SO(3)$ is explicitly computed and reveals that the state dependent gain filters have a larger robustness domain with respect to angular velocity measurement errors. On the other hand, the robustness domain for the three filters with respect to small attitude measurement errors is shown to be the same for the three filters. 
\appendices
\section{Proof of Lemma \ref{lemma::1}}\label{appendix::proof::lemma1}
Let $R\in SO(3)$ be an attitude matrix represented by a rotation of angle $\theta$ around some unit vector $u\in\mathbb{S}^2$. Using \eqref{Rod_formula} and the fact that $[u]_\times^2=-u^\top uI+uu^\top$, one can show that
\begin{align*}
|R|_I^2=\frac{1}{4}\mathrm{tr}(I-R)=\frac{1}{2}(1-\cos(\theta))=\sin^2(\theta/2).
\end{align*}
On the other hand, the rotation matrix $R^2$ represents a rotation of the same direction $u$ as $R$ and with twice the angle of rotation of $R$. Hence, one has $|R^2|_I^2=\sin^2(\theta)=4\cos^2(\theta/2)\sin^2(\theta/2)=4(1-|R|_I^2)|R|_I^2$. Moreover, one has
\begin{align}\nonumber
\|\psi(R)\|^2&=-\frac{1}{2}\mathrm{tr}(\mathbb{P}_a(R),\mathbb{P}_a(R))=\mathrm{tr}(I-R^2)/4=|R^2|_I^2,
\end{align}
for all $R\in SO(3)$. On the other hand, using the fact that $\mathrm{tr}(A[u]_\times)=0$ for any symmetric matrix $A$ and $u\in\mathbb{R}^3$ and $[u]_\times^2=-u^\top uI+uu^\top$, one obtains
\begin{align*}
\mathrm{tr}(A(I-R))&=-\mathrm{tr}(A(\sin(\theta)[u]_\times+(1-\cos(\theta))[u]_\times^2)),\\
							  &=-(1-\cos(\theta))\mathrm{tr}(A[u]_\times^2)),\\
							  &=(1-\cos(\theta))u^\top\bar Au,\\
							  &=2|R|_I^2u^\top\bar Au.
\end{align*}
Moreover, one has $\lambda_{\max}^{\bar A}\|u\|^2\leq u^\top\bar Au\leq\lambda_{\max}^{\bar A}\|u\|^2$ and $\|u\|^2=1$ which proves \eqref{bounds::VA}.

Let $R\in SO(3)\setminus\Pi$ and hence $R=\mathcal{R}_r(\mathcal{Z}(R))$. In view of \eqref{Cayley_formula} one has
\begin{align*}
\mathbb{P}_a(AR)&=\frac{1}{2}(AR-R^\top A)\\
							&=\frac{1}{1+\|\mathcal{Z}(R)\|^2}\big(A\mathcal{Z}(R)\mathcal{Z}(R)^\top-\mathcal{Z}(R)\mathcal{Z}(R)^\top A\\
							&\hspace{3.5cm}+A[\mathcal{Z}(R)]_\times+[\mathcal{Z}(R)]_\times A\big)\\
							&=\frac{[\mathcal{Z}(R)\times A\mathcal{Z}(R)]_\times+[\bar A\mathcal{Z}(\tilde R)]_\times}{1+\|\mathcal{Z}(R)\|^2},
\end{align*}
where equalities $yx^\top-xy^\top=[x\times y]_\times$ and $M^\top[x]_\times+[x]_\times M+[Mx]_\times=\mathrm{tr}(M)[x]_\times$, for all $x,y\in\mathbb{R}^3$ and $M\in\mathbb{R}^{3\times 3}$, have been used. Consequently, one obtains
\begin{align*}
\psi(AR)&=\frac{\mathcal{Z}(R)\times A\mathcal{Z}(R)+\bar A\mathcal{Z}(\tilde R)}{1+\|\mathcal{Z}(R)\|^2}\\
			  &=\frac{(I-[\mathcal{Z}(R)]_\times)}{1+\|\mathcal{Z}(R)\|^2}\bar A\mathcal{Z}(R).
\end{align*}
It follows that
\begin{align*}
\|\psi(AR)\|^2&=\frac{\mathcal{Z}(R)^\top\bar A(I+[\mathcal{Z}(R)]_\times)(I-[\mathcal{Z}(R)]_\times)\bar A\mathcal{Z}(R)}{(1+\|\mathcal{Z}(R)\|^2)^2}\\
					 &=\frac{\mathcal{Z}(R)^\top\bar A(I-[\mathcal{Z}(R)]_\times^2)\bar A\mathcal{Z}(R)}{(1+\|\mathcal{Z}(R)\|^2)^2}\\
					 &=\frac{\mathcal{Z}(R)^\top\bar A^2\mathcal{Z}(R)}{1+\|\mathcal{Z}(R)\|^2}-\frac{\big(\mathcal{Z}(R)^\top\bar A\mathcal{Z}(R)\big)^2}{\big(1+\|\mathcal{Z}(R)\|^2\big)^2}\\
					 &=\frac{\|\bar A\mathcal{Z}(R)\|^2}{1+\|\mathcal{Z}(R)\|^2}\left(1-\frac{\|\mathcal{Z}(R)\|^2\cos^2(\phi)}{1+\|\mathcal{Z}(R)\|^2}\right),
\end{align*}
where $\phi=\angle(\mathcal{Z}(R),\bar A\mathcal{Z}(R))$. On the other hand, one has
\begin{multline*}
\lambda_{\min}^{\bar A}\|\mathcal{Z}(R)\|^2\leq\mathcal{Z}(R)^\top\bar A\mathcal{Z}(R)=\|\mathcal{Z}(R)\|\|\bar A\mathcal{Z}(R)\|\cos(\phi)\\\leq\|\mathcal{Z}(R)\|^2\|\bar A\|_2\cos(\phi)=\lambda_{\max}^{\bar A}\|\mathcal{Z}(R)\|^2\cos(\phi),
\end{multline*}
which implies that
$$
\xi=\frac{\lambda_{\min}^{\bar A}}{\lambda_{\max}^{\bar A}}\leq\cos(\phi)\leq 1.
$$
Consequently, it follows that
\begin{align*}
\|\psi(AR)\|&\leq\big(\lambda_{\max}^{\bar A}\big)^2\frac{\|\mathcal{Z}(R)\|^2}{1+\|\mathcal{Z}(R)\|^2}\left(1-\frac{\|\mathcal{Z}(R)\|^2\xi^2}{1+\|\mathcal{Z}(R)\|^2}\right)\\
				 &\geq\big(\lambda_{\min}^{\bar A}\big)^2\frac{\|\mathcal{Z}(R)\|^2}{1+\|\mathcal{Z}(R)\|^2}\left(1-\frac{\|\mathcal{Z}(R)\|^2}{1+\|\mathcal{Z}(R)\|^2}\right),
\end{align*}
which implies identity \eqref{psiA_ineq} in view of the fact that
\begin{align}\label{norm::Z}
\|\mathcal{Z}(R)\|^2=\frac{\|\psi(R)\|^2}{4(1-|R|_I^2)^2}=\frac{|R|_I^2}{1-|R|_I^2}.
\end{align}

\bibliographystyle{IEEETran}
\bibliography{IEEEabrv,Hybrid}
\begin{IEEEbiography}[{\includegraphics[width=1in,height=1.25in,clip,keepaspectratio]{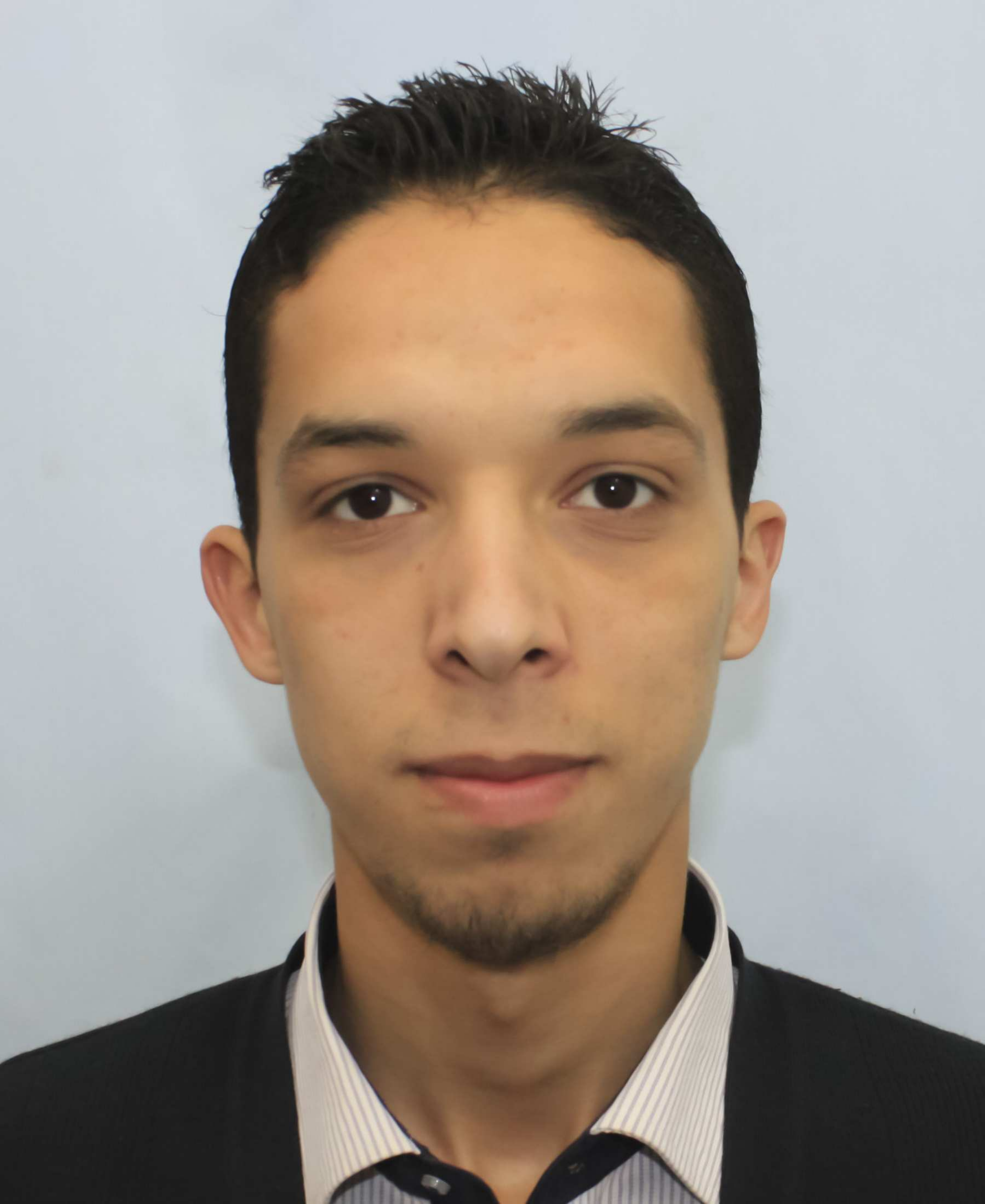}}]{Soulaimane Berkane}
received his B.Sc. and M. Sc. degrees in Automatic Control from Ecole Nationale Polytechnique, Algiers, in 2013. He is currently a Ph. D. candidate and a Research Assistant at the department of Electrical and Computer Engineering at the University of Western Ontario, Canada. His research interest focuses on nonlinear and hybrid control with application to geometric attitude control and estimation.
\end{IEEEbiography}

\begin{IEEEbiography}[{\includegraphics[width=1in,height=1.25in,clip,keepaspectratio]{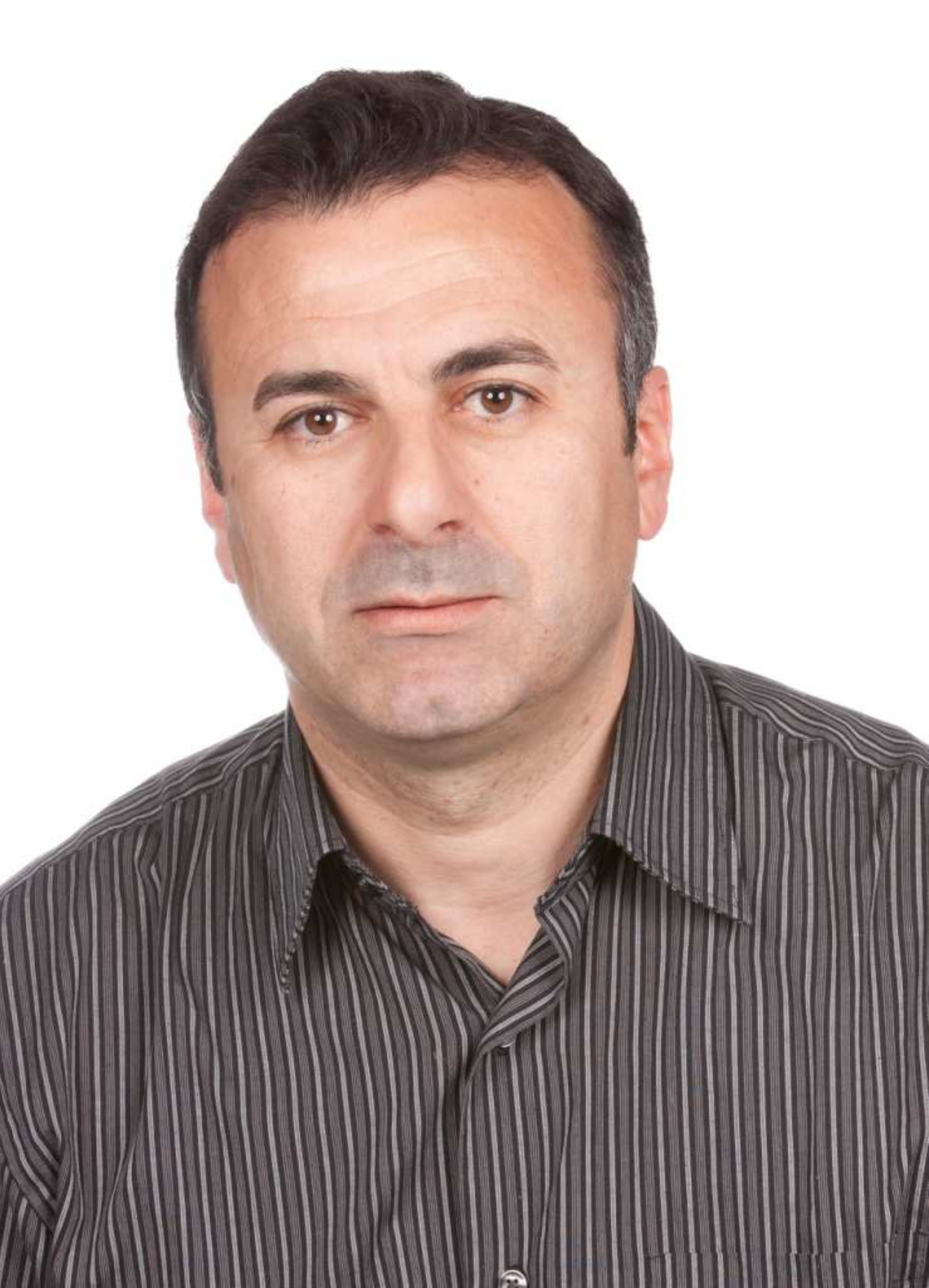}}]{Abdelhamid Tayebi}
received his B. Sc. in Electrical Engineering from Ecole Nationale Polytechnique, Algiers, in 1992, his M. Sc. in robotics from Universit\'e Pierre \& Marie Curie, Paris, France in 1993, and his Ph. D. in Robotics and Automatic Control from Universit\'e de Picardie Jules Verne, France in December 1997. He joined the department of Electrical Engineering at Lakehead University in December 1999 where he is presently a full Professor. He is a Senior Member of IEEE and serves as an Associate Editor for Automatica, IEEE Transactions on Control Systems Technology, Control Engineering Practice and IEEE CSS Conference Editorial Board. He also served as an Associate Editor for IEEE Transactions on Cybernetics (2006-2014). He is a member of the board of Directors of IFAC Canada. He is the founder and Director of the Automatic Control Laboratory at Lakehead University.  His research interests are related to Control Engineering in general with applications to unmanned aerial vehicles.
\end{IEEEbiography}

\end{document}